\newtheorem{theorem}{Theorem}[section]
\newtheorem{lemma}[theorem]{Lemma}
\newtheorem{proposition}[theorem]{Proposition}
\theoremstyle{remark}
\newtheorem{remark}[theorem]{Remark}
\numberwithin{equation}{section}
\newcommand{\C}{\mathbb{C}}
\newcommand{\Z}{\mathbb{Z}}
\newcommand{\Q}{\mathbb{Q}}
\newcommand{\HH}{\mathfrak{H}}
\newcommand{\tr}[1]{%
{}^t\hspace{-0.5mm}#1%
}%
\newcommand{\Spg}{{\operatorname{Sp}}}
\newcommand{\trace}{\operatorname{tr}}
\newcommand{\rank}{\operatorname{rank}}
\begin{document}

\title[Fourier coefficients of Siegel modular forms]{On Fourier coefficients of Siegel modular forms of degree two with respect to congruence subgroups}

\author{Masataka Chida, Hidenori Katsurada, Kohji Matsumoto}

\address{Department of Mathematics, Graduate School of Science, Kyoto University, Kyoto 606-8502, Japan}

\email{chida@math.kyoto-u.ac.jp}

\address{Muroran Institute of Technology, 27-1 Mizumoto, Muroran, 050-8585, Japan}

\email{hidenori@mmm.muroran-it.ac.jp}

\address{Graduate School of Mathematics, Nagoya University, Chikusa-ku, Nagoya 464-8602, Japan}

\email{kohjimat@math.nagoya-u.ac.jp}


\keywords{Siegel modular form, Fourier coefficients, Petersson formula}

\begin{abstract}
We prove a formula of Petersson's type for Fourier coefficients of Siegel cusp 
forms of degree 2 with respect to congruence subgroups, and as a corollary,
show upper bound estimates of individual Fourier coefficient.   The method in
this paper is essentially a generalization of Kitaoka's previous work which studied
the full modular case, but some modification is necessary to obtain estimates
which are sharp with respect to the level aspect.
\end{abstract}

\maketitle


\section{Introduction and the statement of main results}

Let $M_l(R)$ be the set of $l\times l$ matrices whose components belong to a ring
$R$, and define
\begin{align*}
&\Lambda =\{ S \in \operatorname{M}_2 (\Z) \ | \ \tr{S} = S \},\\
&\Lambda^* = \{ S=(s_{ij}) \in \operatorname{M}_2 (\Q ) \ | \ s_{ii}\in \Z , 
\ 2s_{12}=2s_{21} \in \Z  \}.
\end{align*}
Let $\mathbb{H}_2 =\{ Z=X+iY \in M_2(\C ) \, | \, \tr{Z}=Z,Y>0 \}$ be the Siegel upper
half space of degree 2, 
$\Spg(2,\Z)$ be the Siegel modular group of degree 2 which consists of all
non-singular matrices $M\in M_4(\Z)$ satisfying $\tr{M}JM=J$, where
$J=
\begin{pmatrix}                                                                        
& 1_2 \\   
-1_2 & 
\end{pmatrix}   
$.
For a positive integer $N$, define
$$
\Gamma_0^{(2)}(N) = \left\{ \left.
\begin{pmatrix}
A & B \\
C & D
\end{pmatrix} \in \Spg(2,\Z)
\ \right|\  C \equiv 
\begin{pmatrix}
0 & 0 \\
0 & 0
\end{pmatrix}
\,
\textup{mod} \, N \right\}
.$$
Denote by $S_{k}(\Gamma_0^{(2)}(N))$ the space of Siegel cusp forms of weight $k (\geq 1)$ 
with respect to
$\Gamma_0^{(2)}(N)$, and write the Fourier expansion of $f\in S_{k}(\Gamma_0^{(2)}(N))$ as
\begin{align}\label{1-1}
f(Z)=\sum_{0\leq Q \in \Lambda^*}a_f(Q) e(\trace(QZ)),
\end{align}
where $Z\in \mathbb{H}_2$, $e(x)=\exp(2\pi ix)$ and $\trace$ denotes the trace map.
The main purpose of the present paper is to prove a Petersson type formula 
for the above Fourier coefficients $a_f(Q)$.
Such a study was carried out by Kitaoka \cite{Ki} in the full modular case,
in order to obtain an upper bound estimate of Fourier coefficients.
Our motivation is to generalize Kitaoka's result to the case of general
$\Gamma_0(N)$.

First recall the classical elliptic modular case.   Let $S_k(\Gamma_0(N))$ be the space of 
elliptic cusp forms of weight $k$ and level $N$, $f\in S_k(\Gamma_0(N))$, and denote its 
Fourier coefficients by $a_f(n)$.   Then the classical Petersson formula is
\begin{align}\label{1-2}
\lefteqn{\frac{\Gamma(k-1)}{(4\pi\sqrt{mn})^{k-1}}\sum_{f}
   \frac{\overline{a_f(m)}a_f(n)}{\langle f,f\rangle}}\\
&=\delta_{mn}+\frac{2\pi}{(-1)^{k/2}}
  \sum_{\stackrel{c>0}{c\equiv 0\;(\rm{mod} N)}}\frac{1}{c}S(m,n;c)
  J_{k-1}\left(\frac{4\pi\sqrt{mn}}{c}\right),\notag
\end{align}
where $\langle\;,\;\rangle$ denotes the usual Petersson inner product, 
$\delta_{mn}$ is the Kronecker delta, $S(m,n;c)$ is the Kloosterman sum, 
$J_{k-1}(\cdot)$ is the $(k-1)$-th $J$-Bessel function, and the sum on the left-hand 
side runs over an orthogonal basis of $S_k(N)$ (see Theorem 3.6 in \cite{Iwa}).
From this formula, evaluating the sum on the right-hand side, we can show
\begin{align}\label{1-3}
\frac{\Gamma(k-1)}{(4\pi\sqrt{mn})^{k-1}}\sum_{f}                                
   \frac{\overline{a_f(m)}a_f(n)}{\langle f,f\rangle}
=\delta_{mn}+O((m,n)^{1/2}(mn)^{(k-1)/2}N^{1/2-k}d(N)),
\end{align}
where $d(\cdot)$ denotes the divisor function (Duke \cite{Du}, Kamiya \cite{Ka}).

Now return to the Siegel case.   Let $\mathcal{F}_{k,N}$ be a set of orthogonal 
basis of $S_{k}(\Gamma_0^{(2)}(N))$.   For $Q$, $T\in\Lambda^*$, define
$$
\delta(Q,T)=\# \{ U \in \operatorname{GL}(2,\Z ) | UQ\tr{U} =T \}.
$$
In what follows, $Q$ is to be regarded as fixed, and
$\varepsilon$ denotes an arbitrarily small positive number, not
necessarily the same at each occurrence.
The constants implied by Landau's $O$-symbol and Vinogradov's $\ll$ symbol
may depend on $Q$, $\varepsilon$.

Now we state our main results in the present paper.

\begin{theorem}\label{th1-1}
Let $Q$, $T\in\Lambda^*$, both are positive definite.   Then 

{\rm (i)} We have
\begin{align}\label{1-4}
&\pi^{1\slash 2} (4\pi )^{3-2k}\Gamma (k -\frac{3}{2})\Gamma (k-2)       
(\operatorname{det} Q)^{-k+\frac{3}{2}}\sum_{f\in \mathcal{F}_{k,N}}
\frac{\overline{a_f(Q)}a_f(T)}{\langle f,f\rangle}\\
&\qquad\qquad=\delta(Q,T)+E_Q(T,N),\notag
\end{align}
where $E_Q(T,N)$ is the error term, in the sense that it tends to $0$ when
$N\to\infty$.   Moreover the estimates

{\rm (ii)}
$E_Q(T,N)=O(N^{\frac{3}{2}-k}|T|^{k-\frac{3}{2}}+
N^{2-k+\varepsilon}|T|^{\frac{k}{2}-\frac{1}{4}+\varepsilon}
+N^{3-2k+\varepsilon}|T|^{k-1+\varepsilon}),$

{\rm (iii)}
$ E_Q(T,N)=O(N^{-1/2+\varepsilon}|T|^{k/2-1/4+\varepsilon})$

\noindent hold for $k\geq 3$.
\end{theorem}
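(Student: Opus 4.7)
The plan is to follow Kitaoka's approach via Poincar\'e series, with the modifications necessary to make all bounds uniform in the level $N$. Write $\Gamma_\infty\subset\Gamma_0^{(2)}(N)$ for the parabolic subgroup of translations $\left\{\begin{pmatrix}1_2&S\\0&1_2\end{pmatrix}:S\in\Lambda\right\}$, and, for positive definite $Q$ and $k\geq 3$, introduce the Siegel Poincar\'e series
$$
P_Q(Z)=\sum_{M\in\Gamma_\infty\backslash \Gamma_0^{(2)}(N)}(\det(CZ+D))^{-k}\,e\bigl(\trace(Q\cdot M\langle Z\rangle)\bigr)\in S_k(\Gamma_0^{(2)}(N)).
$$
The standard unfolding, combined with the Siegel integral $\int_{Y>0} e^{-4\pi\trace(QY)}(\det Y)^{k-3}\,dY = c_k\,(\det Q)^{3/2-k}$ with $c_k=\pi^{1/2}(4\pi)^{3-2k}\Gamma(k-3/2)\Gamma(k-2)$, yields the reproducing identity $\langle P_Q,f\rangle = c_k(\det Q)^{3/2-k}\overline{a_f(Q)}$ for every $f\in S_k(\Gamma_0^{(2)}(N))$. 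Expanding $P_Q$ in the orthogonal basis $\mathcal{F}_{k,N}$ then identifies the spectral sum on the left-hand side of (\ref{1-4}) with the $T$-th Fourier coefficient $a_{P_Q}(T)$.

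The computation of $a_{P_Q}(T)$ proceeds by decomposing the cosets $\Gamma_\infty\backslash\Gamma_0^{(2)}(N)$ according to $\rank C\in\{0,1,2\}$. The rank-zero cosets are represented by $\begin{pmatrix}U & \ast\\ 0 & \tr{U^{-1}}\end{pmatrix}$ with $U\in\operatorname{GL}(2,\Z)$, and a direct calculation shows that they contribute exactly $\delta(Q,T)$ to $a_{P_Q}(T)$. The rank-one and rank-two cosets together produce the error term $E_Q(T,N)$: after integrating over the real variable $X$ one obtains, in each case, a sum over matrices $C$ with $N\mid C$ (taken modulo suitable $\operatorname{GL}(2,\Z)$-equivalences on the left and the right) of a generalized Kloosterman-type sum, weighted by a Bessel-function factor (in the rank-one case) or a confluent-hypergeometric factor of matrix argument (in the rank-two case) arising from the $Y$-integration.

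For part (ii), trivial (absolute value) bounds for the Kloosterman-type sums, combined with the standard decay of the Bessel and confluent-hypergeometric factors, suffice; the three summands in the stated bound correspond, respectively, to the rank-$2$ piece estimated trivially, to an intermediate range obtained by balancing the matrix-argument size against the Bessel decay, and to the rank-$1$ piece. Part (iii) requires genuine square-root cancellation: for the rank-one term one invokes the Weil bound for ordinary Kloosterman sums, while for the rank-two term one applies Kitaoka's cancellation estimates for Kloosterman-type sums over $\operatorname{Sym}_2(\Z/c\Z)$, truncated so that the $N$-divisibility of $C$ is built in. Averaging these savings against the remaining $C$-summation and archimedean integrals yields the cleaner bound $O(N^{-1/2+\varepsilon}|T|^{k/2-1/4+\varepsilon})$.

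The principal technical obstacle is making this last step uniform in the level $N$. Kitaoka's original treatment needed only to control archimedean quantities; here one must simultaneously track how the congruence $N\mid C$ interacts with the range of summation and with the Weil/Kitaoka cancellations in the Kloosterman sums. Producing the sharp saving $N^{-1/2+\varepsilon}$ in~(iii), rather than the crude $N^{3/2-k}$ that absolute-value bounds would give, requires exploiting this interaction carefully in both the rank-one and the rank-two contributions, and constitutes the main new input beyond \cite{Ki}.
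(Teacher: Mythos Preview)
Your overall strategy---Poincar\'e series, the rank decomposition of $C$, Kloosterman-type sums weighted by Bessel factors---is exactly the paper's approach, and the identification of the rank-$0$ piece with $\delta(Q,T)$ is correct. However, several of your specific attributions are wrong in ways that would derail the execution.

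First, the three summands in (ii) are not ordered as you say. The term $N^{3/2-k}|T|^{k-3/2}$ is the rank-$1$ contribution (Proposition~\ref{prop4-3}), while both $N^{2-k+\varepsilon}|T|^{k/2-1/4+\varepsilon}$ and $N^{3-2k+\varepsilon}|T|^{k-1+\varepsilon}$ come from rank~$2$ (Proposition~\ref{prop5-6}). Second, and more importantly, the distinction between (ii) and (iii) is \emph{not} ``trivial Kloosterman bounds versus Weil/Kitaoka cancellation''. Kitaoka's nontrivial bound $K(P,T;C)=O(c_1^2 c_2^{1/2+\varepsilon}(c_2,t_4)^{1/2})$ is used for the rank-$2$ piece in \emph{both} (ii) and (iii); in the rank-$1$ piece the arithmetic sum over $d_2$ is a generalized quadratic \emph{Gauss} sum, not a Kloosterman sum, and is estimated by \eqref{4-1} in both cases as well. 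The actual difference between (ii) and (iii) lies entirely on the archimedean side: for (ii) one uses only the small-argument bound $J_{k-3/2}(x)\ll x^{k-3/2}$ (and only cases (a),(c) of \eqref{5-3}), whereas for (iii) one splits according to the size of the Bessel argument and also uses $J_{k-3/2}(x)\ll x^{-1/2}$ (and case (b) of \eqref{5-3}). The $N^{-1/2+\varepsilon}$ saving in (iii) is then extracted not from extra cancellation in exponential sums but from a careful summation lemma (Lemma~\ref{lem4-4}) tracking the divisor condition $N\mid Nc_1$ against the dyadic split, together with the analogous analysis for rank~$2$. So your last paragraph correctly identifies the issue---making the $N$-dependence sharp---but locates the mechanism in the wrong place.
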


\begin{remark}\label{rem-1}
{\rm (i)} When there is no $U\in \operatorname{GL}(2,\Z )$ satisfying $UQ\tr{U} =T$, 
obviously $\delta(Q,T)=0$.    Therefore the role of $\delta(Q,T)$ is similar to
the delta symbol in formula \eqref{1-3}.

{\rm (ii)} This result is a generalization of Proposition 3.3 in
Kowalski-Saha-Tsimerman \cite{KST}.
They applied the estimate to show an equidistribution
result for $L$-functions associated to Siegel cusp forms of genus 2
and growing weight $k$.
So it is expected that our result can be used to prove
a similar result for growing level $N$.

\end{remark}

From the above theorem, as we will see in the next section, we can deduce an
upper bound estimate of individual Fourier coefficient.

\begin{theorem}\label{th1-3}
When there is no $U\in \operatorname{GL}(2,\Z )$ satisfying $UQ\tr{U} =T$,
we obtain

{\rm (a)} $a_f(T)=O(N^{\frac{3}{2}-k}|T|^{k-\frac{3}{2}}+                                     
N^{2-k+\varepsilon}|T|^{\frac{k}{2}-\frac{1}{4}+\varepsilon}                           
+N^{3-2k+\varepsilon}|T|^{k-1+\varepsilon}).$

{\rm (b)} $a_f(T)=O(N^{-1/2+\varepsilon}|T|^{k/2-1/4+\varepsilon})$

\noindent for $k\geq 3$.
\end{theorem}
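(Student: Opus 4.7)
The plan is to derive Theorem~\ref{th1-3} directly from Theorem~\ref{th1-1} by combining a positivity argument with the Cauchy--Schwarz inequality.  Under the hypothesis that no $U\in \mathrm{GL}(2,\Z)$ satisfies $UQ\,{}^t U = T$, we have $\delta(Q,T)=0$, so the main term in \eqref{1-4} vanishes and we are left with
\begin{equation*}
\pi^{1/2}(4\pi)^{3-2k}\Gamma(k-\tfrac{3}{2})\Gamma(k-2)(\det Q)^{-k+3/2}\sum_{f\in\mathcal{F}_{k,N}}\frac{\overline{a_f(Q)}\,a_f(T)}{\langle f,f\rangle} = E_Q(T,N).
\end{equation*}
Since $Q$ is fixed, the $k$- and $Q$-dependent prefactor is an admissible constant, so that the upper bounds (ii) and (iii) translate directly into corresponding bounds on the spectral sum on the left.

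The second step is to extract an estimate for a single $|a_f(T)|$ from this identity.  The natural starting point is the trivial individual control
\begin{equation*}
\frac{|a_f(T)|^2}{\langle f,f\rangle}\;\leq\;\sum_{g\in\mathcal{F}_{k,N}}\frac{|a_g(T)|^2}{\langle g,g\rangle},
\end{equation*}
which follows from the non-negativity of each diagonal term.  The right-hand side can be bounded by applying Theorem~\ref{th1-1} to the diagonal pair $(T,T)$, where $\delta(T,T)\geq 1$ is finite and $E_T(T,N)$ controls the remainder.  The sharper level-aspect savings dictated by the hypothesis $\delta(Q,T)=0$ are then injected by invoking the off-diagonal identity above through Cauchy--Schwarz, interlocking the positive diagonal sums at $(Q,Q)$ and $(T,T)$ with the signed off-diagonal sum.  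Distributing (ii) and (iii) through this chain produces (a) and (b) respectively, after absorbing all $k$-, $Q$- and $f$-dependent constants into the implied constant (recalling that the $O$-notation is allowed to depend on $Q$).

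The main obstacle I anticipate is precisely that the off-diagonal spectral sum is a signed sum when $Q\neq T$, so positivity alone cannot isolate a single term and merely invoking the diagonal Petersson formula would only recover a Hecke-type bound of the form $|T|^{k-3/2}$, missing the $N$-decay.  The resolution is the interplay just described: one uses the positive diagonal identity to obtain individual control, and the signed off-diagonal identity (with $\delta(Q,T)=0$) to feed in the sharper $E_Q(T,N)$-bound via Cauchy--Schwarz.  A secondary, routine matter is to track the $k$-dependent Gamma-factor and the $(\det Q)^{-k+3/2}$ normalization through the manipulation; since $Q$ is fixed these are harmless.
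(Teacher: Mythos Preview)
Your plan has a genuine gap at the ``interlocking'' step. Cauchy--Schwarz applied to the off-diagonal spectral sum yields
\[
\Bigl|\sum_{g\in\mathcal{F}_{k,N}} \frac{\overline{a_g(Q)}\,a_g(T)}{\langle g,g\rangle}\Bigr|
\;\le\;
\Bigl(\sum_{g}\frac{|a_g(Q)|^2}{\langle g,g\rangle}\Bigr)^{1/2}
\Bigl(\sum_{g}\frac{|a_g(T)|^2}{\langle g,g\rangle}\Bigr)^{1/2},
\]
which is an \emph{upper} bound on $|E_Q(T,N)|$ in terms of the diagonal sums, not the other way around. Since the off-diagonal sum is signed, neither positivity nor Cauchy--Schwarz can isolate a single summand $\overline{a_f(Q)}a_f(T)/\langle f,f\rangle$ from it, and your proposal never specifies a concrete mechanism that would do so. There is a second, independent problem: invoking Theorem~\ref{th1-1} at the pair $(T,T)$ is illegitimate here, because the implied constants in that theorem are allowed to depend on the \emph{first} argument, so the bound on $E_T(T,N)$ would itself depend on $T$ in an uncontrolled way.

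The paper avoids all of this by a different and much simpler route. By \eqref{2-5} the spectral sum on the left of \eqref{1-4} is, up to the harmless prefactor, exactly the $T$-th Fourier coefficient $A_{Q,N}(T)$ of the Poincar\'e series $g_N(\,\cdot\,,Q)$, which is itself an element of $S_k(\Gamma_0^{(2)}(N))$. Hence Theorem~\ref{th1-1} already \emph{is} the required Fourier-coefficient estimate for that particular cusp form. The extension to an arbitrary $f$ is then pure linear algebra: since Poincar\'e series span $S_k(\Gamma_0^{(2)}(N))$, every cusp form is a finite linear combination of them, and its Fourier coefficients inherit the same bounds. No positivity or Cauchy--Schwarz argument enters.
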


When $N=1$, 
Theorem \ref{th1-3} (b) is exactly Kitaoka's estimate \cite{Ki}.
However, the estimate with respect to $N$ is rather weak in (b).   
This point is supplied by (a), which gives a sharp estimate with respect to $N$.
This (a) corresponds to the error estimate of Duke-Kamiya in \eqref{1-3}.

In the following sections we will give the proof of the above theorems.
Many parts of the proof are rather straightforward generalizations of Kitaoka's
argument in \cite{Ki}, but we describe the details because we have to trace
carefully how is the effect of $N$.    In particular, some modification of
Kitaoka's argument is necessary to obtain estimates which are sharp with respect
to $N$.

\section{Poincar{\'e} series}

For $M=
\begin{pmatrix}                                                                       
A & B \\                                                                             
C & D                                                                                  
\end{pmatrix} \in \Spg(2,\Z)$ and $Z\in \mathbb{H}_2$, we set
$$
j(M,Z) = \operatorname{det}(CZ+D)
$$
and
$$
M\langle Z \rangle =(AZ+B)(CZ+D)^{-1}.
$$
Moreover, we set
$$
\Gamma_1^{(2)} (\infty )=
\left\{ \left.
\begin{pmatrix}
1_2 & S \\
& 1_2
\end{pmatrix}
\ \right|\
S\in \Lambda
\right\}.
$$
For $Q\in \Lambda^*$ with $Q>0$ and positive integers $k$, $N$, we define the 
Poincar{\'e} series $g_N (Z,Q)$ of
weight $k$ with respect to $\Gamma_0^{(2)}(N)$ by
$$
g_N(Z,Q)=\sum_{M\in \Gamma_1^{(2)}(\infty) \backslash \Gamma_0^{(2)}(N)}
e(\operatorname{tr}(Q\cdot M\langle Z\rangle ))j(M,Z)^{-k}.
$$
For $f,g \in S_{k}(\Gamma_0^{(2)}(N))$, we define the (unnormalized) Petersson norm 
of $f$ and $g$ by
$$     
\langle f,g\rangle =\int_{\Gamma_0^{(2)}(N)\backslash \mathbb{H}_2}f(Z) \overline{g(Z)}        
(\operatorname{det} Y)^{k-3} dZ.                                                       
$$
\begin{proposition}\label{prop2-1}
Let $f\in S_{k}(\Gamma_0^{(2)}(N))$.
Then we have
\begin{align}\label{2-1}
\langle g_N (\cdot ,Q), f \rangle =\pi^{1\slash 2} (4\pi )^{3-2k}
\Gamma (k -\frac{3}{2})\Gamma (k-2)
(\operatorname{det} Q)^{-k+\frac{3}{2}}\overline{a_f(Q)},
\end{align}
and consequently,
\begin{align}\label{2-2}
g_N(Z,Q)=\pi^{1\slash 2} (4\pi )^{3-2k}\Gamma (k -\frac{3}{2})\Gamma (k-2)             
(\operatorname{det} Q)^{-k+\frac{3}{2}}\sum_{f\in \mathcal{F}_{k,N}}                   
\frac{\overline{a_f(Q)}f(Z)}{\langle  f,f \rangle}.                                     
\end{align}
\end{proposition}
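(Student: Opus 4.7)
The argument is the classical Poincar\'e series unfolding, and \eqref{2-2} is an immediate consequence of \eqref{2-1}: since $g_N(\cdot,Q)\in S_k(\Gamma_0^{(2)}(N))$ (for $k$ large enough that the defining series converges absolutely), expansion in the orthogonal basis $\mathcal{F}_{k,N}$ gives $g_N = \sum_f \langle g_N,f\rangle\, f/\langle f,f\rangle$, and substituting \eqref{2-1} yields \eqref{2-2}. So I would concentrate on \eqref{2-1}.

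Inserting the defining series of $g_N$ into the Petersson integral and using the cocycle identities $f(M\langle Z\rangle)=j(M,Z)^k f(Z)$ and $\det\operatorname{Im}(M\langle Z\rangle)=(\det Y)|j(M,Z)|^{-2}$, together with the $\Spg(2,\R)$-invariance of $(\det Y)^{-3}\,dZ$, I would unfold the sum over $\Gamma_1^{(2)}(\infty)\backslash\Gamma_0^{(2)}(N)$ against the integration domain to obtain
$$
\langle g_N(\cdot,Q), f\rangle = \int_{\Gamma_1^{(2)}(\infty)\backslash \mathbb{H}_2} e(\trace(QZ))\,\overline{f(Z)}\,(\det Y)^{k-3}\,dZ.
$$
On the slab $\{X+iY : X\in\Lambda_{\R}/\Lambda,\ Y>0\}$, a fundamental domain with unit $dX$-volume, I would then substitute the Fourier expansion of $\overline f$. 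Since $\Lambda^{*}$ is the dual lattice of $\Lambda$ under $(A,B)\mapsto\trace(AB)$, the $X$-integrals $\int_{\Lambda_{\R}/\Lambda}e(\trace((Q-T)X))\,dX$ vanish unless $T=Q$, so only the $T=Q$ term survives, leaving
$$
\langle g_N(\cdot,Q), f\rangle = \overline{a_f(Q)}\int_{Y>0} e^{-4\pi\,\trace(QY)}(\det Y)^{k-3}\,dY.
$$

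Finally I would evaluate the remaining integral by the Gindikin--Siegel gamma formula
$$
\int_{Y>0} e^{-\trace(AY)}(\det Y)^{s-3/2}\,dY = \pi^{1/2}\,\Gamma(s)\,\Gamma(s-1/2)\,(\det A)^{-s} \qquad (A>0,\ \operatorname{Re} s>1/2),
$$
specialised to $A = 4\pi Q$ and $s = k-3/2$; this produces exactly the prefactor $\pi^{1/2}(4\pi)^{3-2k}\Gamma(k-3/2)\Gamma(k-2)(\det Q)^{3/2-k}$ appearing in \eqref{2-1}. The substantive technical points are the absolute convergence of $g_N$ together with the interchange of sum and integral in the unfolding (both standard for $k$ sufficiently large), and the Siegel integral evaluation, whose convergence constraint $\operatorname{Re}(k-3/2)>1/2$ corresponds to $k\ge 3$, matching the hypothesis of Theorems \ref{th1-1} and \ref{th1-3}; the rest is bookkeeping.
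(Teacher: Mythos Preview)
Your proposal is correct and is essentially the same argument the paper invokes: the paper simply cites Klingen's book \cite[pp.~76--90]{Kli} and remarks that the unfolding argument there goes through verbatim when $\Gamma_n$ and Klingen's $A_n$ are replaced by $\Gamma_0^{(2)}(N)$ and $\Gamma_1^{(2)}(\infty)$, noting only a factor of $2$ coming from Klingen's slightly different normalisation of the unipotent subgroup. Your write-up is a self-contained version of exactly that unfolding, with the Gindikin--Siegel integral evaluated explicitly; nothing is missing and no different idea is used.
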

\begin{proof}This is a direct generalization of a result
in Klingen's book \cite{Kli}, page 90.
We briefly outline the argument.
We follow the argument in pp.76-90 of Klingen \cite{Kli} with replacing
$\Gamma_n$ and $A_n$ by $\Gamma_0^{(2)}(N)$ and $\Gamma_1^{(2)}(\infty)$, respectively.
The starting point, Proposition 1 (\cite[p.76]{Kli}), does not depend on $N$.
Formulas (7), (8) in \cite[p.78]{Kli} are proved by the technique of
decomposing the Siegel half space $H_n$ into copies 
(by the action of $\{\pm 1\}\backslash \Gamma_n$)
of the fundamental domain $F_n$.   The same technique can be applied to our
present situation, with replacing $S_n^k$ by $S_{n,k}(N)$.
In this way, we follow Klingen's argument until Proposition 3
(\cite[p.85]{Kli}).   In the statement of Proposition 3, the series
$G_n^k(z;g_{\nu})$ is defined, but this is again independent of $N$.
(But be careful with the definition of $\Lambda_n$.)
Also $N$ does not appear in the Fourier expansion of $f\in S_{n,k}(N)$.
On the last line of p.87, Klingen defines $A_n$, which differs from our
$\Gamma_1^{(2)}(\infty)$ by the factor 2.   Therefore, $g_n^k(z,t)$ defined on
p.90 of \cite{Kli} differs from our $g_N(Z,T)$ by the factor 2.
All other parts of the proof are the same as in \cite{Kli}. 
\end{proof}

Substituting \eqref{1-1} (with replacing $Q$ by $T$) into the right-hand side of 
\eqref{2-2}, we have
\begin{align}\label{2-3}
\lefteqn{g_N(Z,Q)=\pi^{1\slash 2} (4\pi )^{3-2k}\Gamma (k -\frac{3}{2})\Gamma (k-2) 
(\operatorname{det} Q)^{-k+\frac{3}{2}}}\\
&\qquad\qquad\times\sum_{0\leq T\in\Lambda^*}
\sum_{f\in \mathcal{F}_{k,N}}                   
\frac{\overline{a_f(Q)}a_f(T)}{\langle  f,f \rangle}e(\trace(TZ)).\notag
\end{align}
Therefore, if we write the Fourier expansion of the Poincar{\'e} series as
\begin{align}\label{2-4}
g_N(Z,Q)=\sum_{0\leq T \in \Lambda^*}A_{Q,N}(T)e(\trace (TZ)),                          
\end{align}
we obtain
\begin{align}\label{2-5}
A_{Q,N}(T)=\pi^{1\slash 2} (4\pi )^{3-2k}\Gamma (k -\frac{3}{2})\Gamma (k-2)       
(\operatorname{det} Q)^{-k+\frac{3}{2}}
\sum_{f\in \mathcal{F}_{k,N}}                       
\frac{\overline{a_f(Q)}a_f(T)}{\langle  f,f \rangle}.
\end{align}
Therefore the Fourier coefficient $A_{Q,N}(T)$ can be estimated by our Theorem
\ref{th1-1}.    In particular, when there is no $U\in \operatorname{GL}(2,\Z )$ 
satisfying $UQ\tr{U} =T$, from Theorem \ref{th1-1} (ii), (iii) (with noting
Remark \ref{rem-1}) we find that $A_{Q,N}(T)$ satisfies the estimations stated in
Theorem \ref{th1-3}.   Since any cusp form can be written as a linear combination of
Poincar{\'e} series, we obtain the assertion of Theorem \ref{th1-3}.

On the other hand, \eqref{2-5} implies that, in order to prove Theorem \ref{th1-1},
it is enough to consider $A_{Q,N}(T)$.    

Let $\HH_N$ be a complete system of representatives of
$\Gamma_1^{(2)}(\infty) \backslash \Gamma_0^{(2)}(N)\slash \Gamma_1^{(2)}(\infty)$.
For $M\in \Spg(2,\Z)$, we denote
$$
\theta (M)=
\left\{
S\in \Lambda \ \left| \ M
\begin{pmatrix}
1_2 & S \\
& 1_2
\end{pmatrix}
M^{-1} \in \Gamma_1^{(2)}(\infty)
\right.
\right\} .
$$
\begin{lemma}[Kitaoka \cite{Ki}, p.158, Lemma 1]\label{lem2-2}
For $M \in \Spg(2,\Z)$, we have
$$
\Gamma_1^{(2)}(\infty)M\Gamma_1^{(2)}(\infty)=\coprod_{S\in \Lambda \slash \theta (M)}\Gamma_1^{(2)}(\infty)M
\begin{pmatrix}
1_2 & S \\
& 1_2
\end{pmatrix}.
$$
\end{lemma}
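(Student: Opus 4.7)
The plan is to prove Lemma \ref{lem2-2} by carrying out a routine double-coset/subgroup computation in $\Spg(2,\Z)$. Let me abbreviate $t(S) = \begin{pmatrix} 1_2 & S \\ 0 & 1_2 \end{pmatrix}$, so that $\Gamma_1^{(2)}(\infty) = \{t(S) : S \in \Lambda\}$, and note that $t(S_1) t(S_2) = t(S_1 + S_2)$ and $t(S)^{-1} = t(-S)$. In particular, $\Gamma_1^{(2)}(\infty)$ is abelian and isomorphic to $\Lambda$ as an additive group.

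First I would observe the trivial decomposition
\[
\Gamma_1^{(2)}(\infty) M \Gamma_1^{(2)}(\infty) = \bigcup_{S \in \Lambda} \Gamma_1^{(2)}(\infty) M t(S),
\]
which is immediate from the description of $\Gamma_1^{(2)}(\infty)$. The content of the lemma is the statement about which of these right $\Gamma_1^{(2)}(\infty)$-cosets coincide.

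Next I would compute: two cosets $\Gamma_1^{(2)}(\infty) M t(S_1)$ and $\Gamma_1^{(2)}(\infty) M t(S_2)$ are equal iff $M t(S_1) t(-S_2) M^{-1} \in \Gamma_1^{(2)}(\infty)$, i.e.\ iff $M t(S_1 - S_2) M^{-1} \in \Gamma_1^{(2)}(\infty)$. By the definition of $\theta(M)$, this is equivalent to $S_1 - S_2 \in \theta(M)$. A quick verification (using that the map $S \mapsto M t(S) M^{-1}$ is a group homomorphism from $\Lambda$ into $\Spg(2,\Z)$) shows that $\theta(M)$ is a subgroup of $\Lambda$, so the equivalence relation $S_1 \sim S_2 \Leftrightarrow S_1 - S_2 \in \theta(M)$ has classes precisely the cosets $\Lambda/\theta(M)$.

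Combining these two steps gives the disjoint decomposition
\[
\Gamma_1^{(2)}(\infty) M \Gamma_1^{(2)}(\infty) = \coprod_{S \in \Lambda/\theta(M)} \Gamma_1^{(2)}(\infty) M t(S),
\]
as claimed. There is no real obstacle here; the only thing to be slightly careful about is to record that $\theta(M)$ really is a subgroup (so that the quotient $\Lambda/\theta(M)$ in the indexing makes sense as a set of cosets), which follows formally from the homomorphism property of $S \mapsto M t(S) M^{-1}$.
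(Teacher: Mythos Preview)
Your argument is correct and is precisely the standard double-coset computation one would expect here. The paper itself does not supply a proof but merely cites Kitaoka \cite{Ki}; your write-up fills in exactly that routine verification (union of cosets, coincidence criterion via $M t(S_1-S_2)M^{-1}\in\Gamma_1^{(2)}(\infty)$, and the observation that $\theta(M)$ is a subgroup).
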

From this lemma, it is easy to see that
\begin{align}\label{2-6}
g_N(Z,Q)=\sum_{M\in \HH_N}H_Q(M,S),
\end{align}
where
\begin{align}\label{2-7}
H_Q (M,Z)=\sum_{S\in \Lambda \slash \theta (M)}
e(\operatorname{tr}(Q\cdot M\langle Z+S \rangle ))j(M,Z+S)^{-k}.
\end{align}
Write the Fourier expansion of $H_Q (M,Z)$ as
\begin{align}\label{2-8}
H_Q (M,Z)=\sum_{0\leq T \in \Lambda^*}h_Q (M,T)e(\trace (TZ)).
\end{align}
Then
\begin{align}\label{2-9}
h_Q (M,T)=\int_{X (\operatorname{mod}\;1)}H_Q(M,Z)e(-\trace (TZ))dX,
\end{align}
where
$X=
\begin{pmatrix}
x_1 & x_2 \\
x_2 & x_4
\end{pmatrix}
=\Re(Z)$, $dX=dx_1 dx_2 dx_4$.

Comparing \eqref{2-6}, \eqref{2-8} with \eqref{2-4}, we obtain
\begin{align}\label{2-10}
A_{Q,N}(T)=\sum_{M\in \HH_N}h_Q(M,T).
\end{align}
Therefore, to prove Theorem \ref{th1-1}, our remaining task is to evaluate each term
on the right-hand side of \eqref{2-10}.
Let
\begin{align*}
\HH_N^{(i)}=\{ M=\begin{pmatrix}  
A & B \\                                                                               
C & D                                                                                   
\end{pmatrix}\in \HH_N \ |\ \rank C =i \} 
\end{align*}
for $i=0,1$ or $2$, and decompose \eqref{2-10} as
\begin{align}\label{2-11}
A_{Q,N}(T)=\Sigma_0+\Sigma_1+\Sigma_2,
\end{align}
where
$$
  \Sigma_i=\sum_{M\in \HH_N^{(i)}}h_Q(M,T)\qquad(i=0,1,2).
$$

In the following three sections we evaluate $\Sigma_0$, $\Sigma_1$ and
$\Sigma_2$, respectively.

\section{The case of $\rank C=0$}
In this section, we assume $\operatorname{rank}C=0$, i.e. $C=0$.
\begin{lemma}[Kitaoka \cite{Ki}, p.158, Lemma 3]\label{lem3-1}
As
$\HH_N^{(0)}$
we can choose
$$
\left\{ \left.
\begin{pmatrix}
\tr{U} & 0 \\
0 & {U}^{-1}
\end{pmatrix}
\ \right|\
U \in \operatorname{GL}(2,\Z )
\right\}
$$
and $\theta (M) =\Lambda$.
\end{lemma}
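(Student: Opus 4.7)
The plan is to put any $M = \begin{pmatrix} A & B \\ C & D \end{pmatrix} \in \HH_N^{(0)}$ into a canonical form using the symplectic relations and then compute $\theta(M)$ directly. First I would unpack $\tr{M}JM = J$ with $C = 0$: this forces $\tr{A}D = 1_2$ and $\tr{B}D$ symmetric. The first identity gives $A \in \operatorname{GL}(2,\Z)$ and $D = \tr{(A^{-1})}$, while the second then says $A^{-1}B \in \Lambda$. Since $C = 0$ automatically satisfies $C \equiv 0 \pmod N$, nothing about $N$ enters, and in particular $\HH_N^{(0)}$ coincides with its level-one counterpart used by Kitaoka.

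Next I would factor $M = \begin{pmatrix} A & 0 \\ 0 & \tr{(A^{-1})} \end{pmatrix} \begin{pmatrix} 1_2 & A^{-1}B \\ 0 & 1_2 \end{pmatrix}$, which shows that modulo right multiplication by $\Gamma_1^{(2)}(\infty)$ every such $M$ is equivalent to a block-diagonal matrix of the form $\begin{pmatrix} \tr{U} & 0 \\ 0 & U^{-1} \end{pmatrix}$ with $U = \tr{A} \in \operatorname{GL}(2,\Z)$. To see that this set is in fact a system of double coset representatives, I would compute $\begin{pmatrix} 1_2 & S_1 \\ 0 & 1_2 \end{pmatrix} \begin{pmatrix} \tr{U} & 0 \\ 0 & U^{-1} \end{pmatrix} \begin{pmatrix} 1_2 & S_2 \\ 0 & 1_2 \end{pmatrix}$ and observe that the upper-left block $\tr{U}$ is left invariant, so distinct choices of $U$ give distinct double cosets.

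Finally, for $\theta(M) = \Lambda$, the key step is the one-line computation $M \begin{pmatrix} 1_2 & S \\ 0 & 1_2 \end{pmatrix} M^{-1} = \begin{pmatrix} 1_2 & \tr{U}SU \\ 0 & 1_2 \end{pmatrix}$. Because $U \in \operatorname{GL}(2,\Z)$, the assignment $S \mapsto \tr{U}SU$ is a bijection of $\Lambda$ onto itself, so the right-hand side lies in $\Gamma_1^{(2)}(\infty)$ for every $S \in \Lambda$, yielding $\theta(M) = \Lambda$. There is no substantive obstacle: the entire proof is bookkeeping with the symplectic relations, with the only small subtlety being the verification that distinct $U$'s really do produce distinct double cosets, handled by the explicit double-coset calculation above.
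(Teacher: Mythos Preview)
Your argument is correct. The paper itself does not supply a proof of this lemma but simply cites Kitaoka \cite{Ki}; your proposal fills in exactly the standard bookkeeping that Kitaoka's lemma rests on, and in particular your observation that the condition $C\equiv 0\pmod N$ is vacuous when $C=0$, so that $\HH_N^{(0)}$ agrees with the level-one case, is precisely why the paper can invoke Kitaoka's result without modification.
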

\begin{proposition}\label{prop3-2}
We have
$$
\Sigma_0=
\# \{ U \in \operatorname{GL}(2,\Z )\; | \;UQ\tr{U} =T \},
$$
which is hence non-zero only if $Q\sim T$.
\end{proposition}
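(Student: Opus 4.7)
The plan is to use Lemma \ref{lem3-1} to parametrize $\HH_N^{(0)}$ explicitly by $\operatorname{GL}(2,\Z)$, compute each term $h_Q(M,T)$ by a direct Fourier computation, and then sum over $U$.

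First, I would write a general $M \in \HH_N^{(0)}$ as $M_U = \begin{pmatrix} \tr{U} & 0 \\ 0 & U^{-1} \end{pmatrix}$ with $U \in \operatorname{GL}(2,\Z)$. Since $\theta(M_U) = \Lambda$, the sum over $S \in \Lambda/\theta(M_U)$ in \eqref{2-7} collapses to the single class $S = 0$. A direct calculation gives $M_U\langle Z\rangle = \tr{U} Z U$ and $j(M_U, Z) = \det(U^{-1}) = (\det U)^{-1}$, so by cyclic invariance of the trace,
$$H_Q(M_U, Z) = (\det U)^k \, e(\trace(UQ\tr{U} \cdot Z)).$$

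Next, I would substitute into \eqref{2-9}. Because $U \in \operatorname{GL}(2,\Z)$ preserves $\Lambda^*$, the symmetric matrix $R := UQ\tr{U} - T$ lies in $\Lambda^*$; since $\Lambda^*$ is precisely the dual lattice of $\Lambda$, the integral $\int_{X \bmod 1} e(\trace(R\cdot X))\,dX$ reduces entry by entry to a product of Kronecker deltas and equals $\delta_{R,0}$. When $R = 0$ the residual exponential factor $e(\trace(R\cdot iY)) = 1$, so independently of $Y$,
$$h_Q(M_U,T) = (\det U)^k \,\delta_{UQ\tr{U},\,T}.$$

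Summing over $U \in \operatorname{GL}(2,\Z)$ yields $\Sigma_0 = \sum_{U:\,UQ\tr{U}=T}(\det U)^k$. For $k$ even, $(\det U)^k = 1$, and we recover $\Sigma_0 = \#\{U \in \operatorname{GL}(2,\Z): UQ\tr{U}=T\}$ as claimed. I do not expect a substantive obstacle: the argument is essentially bookkeeping, with the only points requiring care being (i) the identification of $\Lambda^*$ as the dual of $\Lambda$, which justifies the $X$-integral producing the Kronecker delta $\delta_{R,0}$, and (ii) the collapse of the coset sum afforded by $\theta(M_U) = \Lambda$ from Lemma \ref{lem3-1}.
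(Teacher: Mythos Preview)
Your proposal is correct and follows essentially the same approach as the paper: parametrize $\HH_N^{(0)}$ via Lemma~\ref{lem3-1}, collapse the $S$-sum using $\theta(M)=\Lambda$, and evaluate the Fourier integral against $e(-\trace(TZ))$ to pick out the condition $UQ\tr{U}=T$. You are in fact slightly more careful than the paper, which silently drops the factor $j(M,Z)^{-k}=(\det U)^k$; your observation that this equals $1$ because $k$ is even (as is implicitly assumed throughout, cf.\ the factor $(-1)^{k/2}$ in Lemma~\ref{lem4-2}) is the correct justification.
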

\begin{proof}
We can choose $M$ which is of the form stated in Lemma \ref{lem3-1}, and
$\theta (M)=\Lambda$. Hence from \eqref{2-7} we have
$$
H_Q(M,Z)=e(\trace (Q\cdot M\langle Z\rangle ))j(M,Z)^k=e(\trace (Q\cdot \tr{U}ZU)).
$$
Therefore \eqref{2-9} gives
\begin{align*}
h_Q(M,T)
=\int_{X \operatorname{mod} 1}e(\trace (Q\cdot \tr{U}ZU)) e(-\trace (TZ))dX.
\end{align*}
Then we have
\begin{align*}
\sum_{M\in \HH_N^{(0)}} h_Q(M,T)
&=\sum_{U\in \operatorname{GL}(2,\Z)}
\int_{X \operatorname{mod} 1}e(\trace (Q\cdot \tr{U}ZU)) e(-\trace (TZ))dX\\
&=\sum_{U\in \operatorname{GL}(2,\Z)}
\int_0^1 \int_0^1 \int_0^1 e(\trace (Q\cdot \tr{U}ZU-TZ))dx_1dx_2dx_4,
\end{align*}
where
$$
X=
\begin{pmatrix}
x_1 & x_2 \\
x_2 & x_4
\end{pmatrix}
=\Re(Z).
$$
Since $\trace (Q\tr{U}ZU-TZ)=0$ is equivalent to $\trace ((UQ\tr{U}-T)Z)=0$,
we see that if $UQ\tr{U}=T$ then $\trace (Q\tr{U}ZU-TZ)=0$ for all $Z$, hence the
above integral is equal to 1.   On the other hand, if $UQ\tr{U}\neq T$, then
$\trace (Q\tr{U}ZU-TZ)\neq 0$ for almost all $Z$, so the above integral vanishes.
This completes the proof.
\end{proof}
\section{The case of $\rank C=1$}

Next we consider the case $\rank C=1$.

\begin{lemma}\label{lem4-1}
As $\HH_N^{(1)}$ we can choose
$$
\left\{ \left.
M=\begin{pmatrix}
* & * \\
U^{-1}C'\tr{V} & U^{-1}D'V^{-1}
\end{pmatrix}
\in \Spg(2,\Z )
\ \right| \
\begin{array}{ll}
U\in G_1, V\in G_2, c_1 \geq 1, d_4=\pm 1,\\ 
(c_1,d_1)=1, d_1, d_2 \operatorname{mod} c_1
\end{array}
\right\}
,$$
where
$$C'=
\begin{pmatrix}
N c_1 & 0 \\
0 & 0
\end{pmatrix}, 
D'=
\begin{pmatrix}
d_1 & d_2 \\
0 & d_4
\end{pmatrix},
G_1=\left\{ 
\begin{pmatrix}
* & * \\
0 & *
\end{pmatrix}
\right\}
\slash\operatorname{GL}(2,\Z),
G_2=\operatorname{GL}(2,\Z)
\left\{
\begin{pmatrix}
1 & * \\
0 & *
\end{pmatrix}
\right\}
.$$
Moreover $\theta (M)$ is given by
$$
\left\{
S\in \Lambda \ \left| \ S[V]=
\begin{pmatrix}
0 & 0 \\
0 & *
\end{pmatrix}
\right.
\right\}
$$
for the above specialized $M$,
where $S[V]=\tr{V}SV$.
\end{lemma}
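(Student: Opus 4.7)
This lemma is the rank-$1$ analogue of Lemma \ref{lem3-1} and generalizes to $\Gamma_0^{(2)}(N)$ the $\Spg(2,\Z)$-statement of Kitaoka \cite{Ki} (where $C'$ has the form $\operatorname{diag}(c_1, 0)$ without the factor $N$). The plan is to follow Kitaoka's proof, inserting the congruence $C \equiv 0 \pmod{N}$ at the appropriate step to refine his parametrization. Given $M = \left(\begin{smallmatrix} A & B \\ C & D \end{smallmatrix}\right) \in \Gamma_0^{(2)}(N)$ with $\rank C = 1$, I would first apply Smith normal form to $C$, obtaining $C = U^{-1}\left(\begin{smallmatrix} c_0 & 0 \\ 0 & 0 \end{smallmatrix}\right)\tr{V}$ with $U, V \in \operatorname{GL}(2,\Z)$ and $c_0 \geq 1$. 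The residual freedom in $(U, V)$, namely the stabilizer of this diagonal form, is exactly what cuts $U$ down to coset representatives in $G_1$ and $V$ to those in $G_2$. Since $C$ equals $c_0$ times the primitive rank-one matrix $(U^{-1}e_1)(e_1^{T}\tr{V})$, the condition $N \mid C$ is equivalent to $N \mid c_0$, and writing $c_0 = Nc_1$ produces the stated $C' = \left(\begin{smallmatrix} Nc_1 & 0 \\ 0 & 0 \end{smallmatrix}\right)$.

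Next I would normalize $D$. The symplectic relation $C\tr{D} = D\tr{C}$, combined with the explicit form of $C$, implies that $\widetilde{D} := UDV$ satisfies $C'\tr{\widetilde{D}} = \widetilde{D} C'$, which forces $\widetilde{D}$ to be upper triangular: $D = U^{-1}\left(\begin{smallmatrix} d_1 & d_2 \\ 0 & d_4 \end{smallmatrix}\right) V^{-1}$. The symplectic equation $A\tr{D} - B\tr{C} = 1_2$ then yields $\alpha_4 d_4 = 1$ and $\alpha_1 d_1 - Nc_1 \beta_1 = 1$ for suitable entries of $A, B$, giving $d_4 = \pm 1$ and $\gcd(Nc_1, d_1) = 1$. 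The factor $\gcd(N, d_1) = 1$ is automatic, since reducing $M$ modulo $N$ yields a block upper-triangular matrix with $\det \equiv \pm d_1 \det(\bar{A}) = 1 \pmod{N}$, so $d_1$ is a unit mod $N$; the remaining condition is the stated $\gcd(c_1, d_1) = 1$. Finally, right multiplication by an element $\left(\begin{smallmatrix} 1_2 & S \\ 0 & 1_2 \end{smallmatrix}\right) \in \Gamma_1^{(2)}(\infty)$ shifts $D'$ by $C'\, S[V]$, which lets us pin down $d_1, d_2$ in a fundamental domain.

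For the formula for $\theta(M)$, I would use $M^{-1} = \left(\begin{smallmatrix} \tr{D} & -\tr{B} \\ -\tr{C} & \tr{A} \end{smallmatrix}\right)$ and expand $M\left(\begin{smallmatrix} 1_2 & S \\ 0 & 1_2 \end{smallmatrix}\right)M^{-1}$ block by block. The $(2,1)$-block is $-CS\tr{C}$, while the $(1,1)$- and $(2,2)$-blocks are $1_2 - AS\tr{C}$ and $1_2 + CS\tr{A}$. Membership in $\Gamma_1^{(2)}(\infty)$ thus requires $CS\tr{C} = 0$ and $AS\tr{C} = 0$. Substituting $C = U^{-1}C'\tr{V}$ reduces the first condition to $C'\, S[V]\, C' = 0$, i.e., $(S[V])_{11} = 0$. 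The lower-triangular shape of $A$ forced by the preceding step (clearest after conjugating to the case $U = V = 1_2$, where $\tr{A}C = \tr{C}A$ forces $\alpha_2 = 0$) converts the second condition into $(S[V])_{12} = 0$. Together these give the stated $S[V] = \left(\begin{smallmatrix} 0 & 0 \\ 0 & * \end{smallmatrix}\right)$.

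The main obstacle will be the double-coset bookkeeping: one must verify that the extra congruence $N \mid C$ neither merges nor splits Kitaoka's $\Spg(2,\Z)$-double cosets beyond the substitution $c_0 \to Nc_1$, so that the tuples $(U, V, c_1, d_1, d_2, d_4)$ above form a complete and irredundant set of representatives for $\Gamma_1^{(2)}(\infty) \backslash \Gamma_0^{(2)}(N) / \Gamma_1^{(2)}(\infty)$. This is conceptually routine but requires patiently tracking the two-sided $\Gamma_1^{(2)}(\infty)$-action through every step of the parametrization.
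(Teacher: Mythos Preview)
Your proposal is correct and matches the paper's approach exactly: the paper gives no detailed argument for Lemma~\ref{lem4-1}, merely citing Kitaoka's Lemma~4 for $N=1$ and declaring ``the above generalization is obvious,'' so your sketch in fact supplies precisely the details the paper omits. Your key observation---that $N\mid C$ forces $N\mid c_0$ in the Smith form, turning Kitaoka's $c_0$ into $Nc_1$ with the rest of the parametrization unchanged---is exactly the obvious generalization the authors have in mind, and is confirmed by the subsequent computations in Section~4, where the sums run over $d_1,d_2\pmod{Nc_1}$ with $(d_1,Nc_1)=1$.
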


When $N=1$, this is Kitaoka's Lemma 4 (\cite{Ki}, p.159).
The above generalization is obvious.

For $U,V$ in the setting of Lemma \ref{lem4-1}, we set
$$
P=
\begin{pmatrix}
p_1 & p_2\slash 2 \\
p_2 \slash 2 & p_4
\end{pmatrix}
=Q[\tr{U}]=UQ\tr{U}
$$
and
$$
S=
\begin{pmatrix}
s_1 & s_2\slash 2 \\
s_2 \slash 2 & s_4
\end{pmatrix}
=T[\tr{V}^{-1}]=V^{-1}T\tr{V}^{-1}.
$$
We choose an $a_1$ satisfying $a_1 d_1 \equiv 1 \mod c_1$.
The following is Kitaoka's Lemma 1 (\cite{Ki}, p.160) when $N=1$.

\begin{lemma}\label{lem4-2}
With the notation as above, we have
\begin{align*}
h_Q(M,T)&=(-1)^{k\slash 2}\sqrt{2}\pi |Q|^{\frac{3}{4}-\frac{k}{2}}|T|^{\frac{k}{2}-\frac{3}{4}}
\delta_{p_4, s_4}s_4^{-\frac{1}{2}}(N c_1)^{-\frac{3}{2}}\\
&\times e\left(
\frac{a_1s_4d_2^2 -(a_1d_4p_2-s_2)d_2}{N c_1}+\frac{a_1p_1+d_1s_1}{N c_1}
+\frac{d_4p_2s_2}{2N c_1s_4}
\right)
J_{k-\frac{3}{2}}\left( \frac{4\pi \sqrt{|T||Q|}}{N c_1s_4}\right) .
\end{align*}
\end{lemma}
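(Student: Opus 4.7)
The plan is to substitute the matrix $M$ of Lemma~\ref{lem4-1} into the integral \eqref{2-9} and unfold against the sum over $S\in\Lambda/\theta(M)$ in \eqref{2-7}. The decisive change of variables is $W=\tr{V}(Z+S)V$, whose entries we denote $w_1,w_2,w_4$. Under this change, Lemma~\ref{lem4-1} identifies $\theta(M)$ with the sublattice of $\Lambda$ generated by $\mathrm{diag}(0,1)$ in $W$-coordinates, so that unfolding the sum over $S\in\Lambda/\theta(M)$ against the integration $X\bmod 1$ converts $h_Q(M,T)$ into an integral over $w_1\in\R$, $w_2\in\R$, and $w_4\in[0,1]$, with trivial Jacobian since $V\in\operatorname{GL}(2,\Z)$.

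Next, evaluate the integrand in $W$-coordinates. Writing $CZ+D=U^{-1}(C'W+D')V^{-1}$ and observing that $C'W+D'$ is upper triangular with diagonal entries $Nc_1w_1+d_1$ and $d_4$, one obtains
\[
 j(M,Z+S)^{-k}=(\det U\det V)^k\,d_4^{-k}\,(Nc_1w_1+d_1)^{-k}.
\]
For the phase, use the symplectic relations $\tr{A}D-\tr{C}B=1$, $\tr{A}C=\tr{C}A$, $\tr{B}D=\tr{D}B$ on $M$ to compute $(AZ+B)(CZ+D)^{-1}$ explicitly; then substitute $Q=U^{-1}P\,\tr{U}^{-1}$ and $T=VS\,\tr{V}$ so that $\trace(Q\cdot M\langle Z+S\rangle)-\trace(T(Z+S))$ decomposes as (i) an affine function of $w_2$ and $w_4$ with coefficients built from the $p_i,s_i,d_j$, plus (ii) a term of the form $-\gamma/(Nc_1w_1+d_1)$ with $\gamma$ proportional to $|T||Q|$, plus (iii) a constant phase.

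The three integrations are then performed in turn. Integration over $w_4$ produces the factor $\delta_{p_4,s_4}$, since the coefficient of $w_4$ in the linearized exponent is a multiple of $p_4-s_4$. Integration over $w_2$, after completing the square modulo $Nc_1$ and invoking $a_1d_1\equiv 1\pmod{Nc_1}$, yields the explicit exponential factor displayed in the statement. The remaining integration over $w_1\in\R$ is of the form $\int_{\R}(Nc_1w_1+d_1)^{-k}\,e(\beta w_1)\,dw_1$, which by the classical integral representation of the $J$-Bessel function evaluates to a constant times $J_{k-3/2}\bigl(4\pi\sqrt{|T||Q|}/(Nc_1s_4)\bigr)$; collecting the resulting constants together with the prefactor $|Q|^{3/4-k/2}|T|^{k/2-3/4}$ produced by the shape of $\gamma$, and the factor $s_4^{-1/2}(Nc_1)^{-3/2}$ coming from the Bessel-integral normalization, one recovers exactly the displayed coefficient.

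The main obstacle is the matrix-algebraic step of computing $\trace(Q\cdot M\langle Z+S\rangle)$ when $C$ has rank $1$: since $CZ+D$ cannot be inverted via a direct formula in $C$, one must exploit the symplectic identities to produce a closed expression in $P$, $D'$, and $W$ that separates cleanly into the $w_4$-linear, $w_2$-linear, and $w_1^{-1}$-type pieces required for the next step. Apart from this, the proof is a transcription of Kitaoka's argument in \cite{Ki}, with $c_1$ systematically replaced by $Nc_1$; the level $N$ enters only through the $(1,1)$-entry of $C'$, and must be tracked carefully through the Kloosterman-type phase and the argument of the Bessel function.
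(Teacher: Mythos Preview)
Your sketch is correct and coincides with the paper's approach: the paper does not supply an independent proof of this lemma but simply records it as Kitaoka's Lemma~1 (\cite{Ki}, p.~160) with $c_1$ replaced throughout by $Nc_1$, and your outline is precisely a description of Kitaoka's computation with that substitution tracked. One small imprecision worth tightening: the $w_1$-integral you write as $\int_{\R}(Nc_1w_1+d_1)^{-k}e(\beta w_1)\,dw_1$ must of course retain the $e(-\gamma/(Nc_1w_1+d_1))$ term you mentioned in (ii); it is that two-sided exponential which produces $J_{k-3/2}$ rather than a Gamma factor.
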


Using this lemma, we evaluate $\Sigma_1$.   First recall
\begin{align}\label{4-1}
\sum_{n\; {\rm mod}\; c}e
\left(
\frac{an^2+bn}{c}
\right)
=O((a,c)^{\frac{1}{2}}c^{\frac{1}{2}}).
\end{align}
This is a well-known estimate on generalized quadratic Gauss sums, but here we
give a sketch of proof.   Denote the left-hand side by $G(a,b,c)$.   When
$(a,c)>1$, then $G(a,b,c)=0$ unless $(a,c)|b$, while in the latter case
$$
  G(a,b,c)=(a,c)G(a/(a,c),b/(a,c),c/(a,c)),
$$
so we may reduce the problem to the case $(a,c)=1$.   When $(a,c)=1$, we write
$c=uv$, where $u$ is a power of 2 and $v$ is odd.   The decomposition
$G(a,b,c)=G(au,b,v)G(av,b,u)$ holds, and $G(au,b,v)$ is explicitly written in the 
form $\eta v^{\frac{1}{2}}$, where $\eta$ is a certain complex number with 
$|\eta|=1$.   Applying Theorem 10.1 of Hua \cite[Chapter 7]{Hu}, we find
$G(av,b,u)=O(u^{\frac{1}{2}})$ (here, the $\varepsilon$-factor in Hua's
statement is not necessary because now $u$ has only one prime divisor).
Therefore $G(a,b,c)=O(c^{\frac{1}{2}})$ as desired.

Using \eqref{4-1}, we find
$$
\sum_{d_2 \, \textup{mod} \, Nc_1}e
\left(
\frac{a_1 s_4 d_2^2-(a_1 d_4 p_2-s_2)d_2}{Nc_1}
\right)
=O((a_1 s_4 , Nc_1)^{\frac{1}{2}}(Nc_1)^{\frac{1}{2}})
=O((s_4, Nc_1)^{\frac{1}{2}}(Nc_1)^{\frac{1}{2}}),
$$
where the last equality follows from the fact $(a_1, Nc_1)=1$.
Therefore
$$
\left| \sum_{d_2 \, \textup{mod}\,  Nc_1} h_Q(M,T)\right|
\ll \delta_{p_4, s_4}|T|^{\frac{k}{2}-\frac{3}{4}}
s_4^{-\frac{1}{2}}(Nc_1)^{-1}(s_4, Nc_1)^{\frac{1}{2}}
\left|
J_{k-\frac{3}{2}}\left( \frac{4\pi \sqrt{|T||Q|}}{Nc_1 s_4} \right)
\right|
$$
(because $Q$ is fixed and so the $Q$-factor is to be included in the implied constant), 
which further implies
\begin{align}\label{4-2}
&\sum_{U\in G_1}
\sum_{{d_1 \, \textup{mod} \, Nc_1}\atop{(d_1,Nc_1)=1, d_4=\pm 1}}
\left| \sum_{d_2 \, \textup{mod} \, Nc_1} h_Q(M,T)\right| \\
&\ll
\sum_{U\in G_1}
\delta_{p_4, s_4}
|T|^{\frac{k}{2}-\frac{3}{4}}
s_4^{-\frac{1}{2}}(s_4, Nc_1)^{\frac{1}{2}}
\left|
J_{k-\frac{3}{2}}\left( \frac{4\pi \sqrt{|T||Q|}}{Nc_1 s_4} \right)
\right|.\notag
\end{align}
Since $G_1$ is parametrized by the second row up to sign, we see that the right-hand
side of \eqref{4-2} is
\begin{align}\label{4-3}
\ll \sum_{u=\binom{u_3}{u_4}}
\delta_{p_4,s_4}|T|^{\frac{k}{2}-\frac{3}{4}}
s_4^{-\frac{1}{2}}(s_4, Nc_1)^{\frac{1}{2}}
\left|
J_{k-\frac{3}{2}}\left( \frac{4\pi \sqrt{|T||Q|}}{Nc_1 s_4} \right)
\right|,
\end{align}
where $u_3$, $u_4$ is determined by 
$U=
\begin{pmatrix}                                                                       
u_1 & u_2 \\                                                                         
u_3 & u_4
\end{pmatrix}
$.
Since $P=Q[\tr{U}]$, we have
\begin{align}\label{PandQ}
  Q[u]=(u_3\; u_4)Q\begin{pmatrix} u_3\\u_4\end{pmatrix}=p_4.
\end{align}
Therefore $\delta_{p_4,s_4}=\delta_{Q[u],s_4}$, but
$\# \{ u  | \ Q[u]=s_4 \} =O(s_4^{\varepsilon})$.
Hence \eqref{4-3} is
\begin{align*}
&\ll
|T|^{\frac{k}{2}-\frac{3}{4}}
s_4^{-\frac{1}{2}+\varepsilon}
(s_4, Nc_1)^{\frac{1}{2}}
\left|
J_{k-\frac{3}{2}}\left( \frac{4\pi \sqrt{|T||Q|}}{Nc_1 s_4} \right)
\right|.
\end{align*}
Therefore
\begin{align}\label{4-5}
\sum_{V\in G_2}
&\sum_{U\in G_1}
\sum_{{d_1 \, \textup{mod} \, Nc_1}\atop {(d_1,Nc_1)=1, d_4=\pm 1}}
\left| \sum_{d_2 \, \textup{mod} \, Nc_1} h_Q(M,T)\right| \\
&\ll\sum_{V\in G_2}
|T|^{\frac{k}{2}-\frac{3}{4}}
s_4^{-\frac{1}{2}+\varepsilon}
(s_4, Nc_1)^{\frac{1}{2}}
\left|
J_{k-\frac{3}{2}}\left( \frac{4\pi \sqrt{|T||Q|}}{Nc_1 s_4} \right)
\right|.\notag
\end{align}

We see easily that
$V$ is parametrized by the first column, and
$s_4
=T\begin{pmatrix}
-v_3 \\
v_1
\end{pmatrix}
$
for
$V=\begin{pmatrix}
v_1 & v_2 \\
v_3 & v_4 
\end{pmatrix}$.
Therefore, setting 
$$
A(m,T)=\# \{ \begin{pmatrix}
v_1 \\
v_2 
\end{pmatrix} | (v_1 ,v_2)=1 ,
T\begin{pmatrix}
v_1 \\
v_2 
\end{pmatrix}
= m \},
$$
we find that the right-hand side of \eqref{4-5} is
\begin{align}\label{4-6}
\ll
|T|^{\frac{k}{2}-\frac{3}{4}}
\sum_{s_4=1}^{\infty}s_4^{-\frac{1}{2}+\varepsilon}(s_4,Nc_1)^{\frac{1}{2}}
A(s_4,T)
\left|
J_{k-\frac{3}{2}}\left( \frac{4\pi \sqrt{|T||Q|}}{Nc_1 s_4} \right)
\right|.
\end{align}

Here we quote the following well-known estimates:
\begin{align}\label{4-7}
J_{k-\frac{3}{2}}(x)= \left\{
\begin{array}{ll}
{\rm (i)} & O(x^{k-\frac{3}{2}}),\\
{\rm (ii)} & O(x^{-\frac{1}{2}})
\end{array}
\right.
\end{align}
for $x>0$ (see Kitaoka \cite{Ki}, p.163, Lemma 2).
Applying \eqref{4-7} (i), we see that \eqref{4-6} is 
%
\begin{align*}
&\ll
|T|^{\frac{k}{2}-\frac{3}{4}}
\sum_{s_4=1}^{\infty}
s_4^{-\frac{1}{2}+\varepsilon }
(s_4,Nc_1)^{\frac{1}{2}}
A(s_4,T)
\left(\frac{4\pi \sqrt{|T||Q|} }{Nc_1 s_4}
\right)^{k-\frac{3}{2}}\\
&\ll
|T|^{k-\frac{3}{2}}
\sum_{s_4=1}^{\infty}A(s_4,T)s_4^{-\frac{1}{2}+\varepsilon }s_4^{\frac{1}{2}}
(Nc_1s_4)^{-k+\frac{3}{2}}.
\end{align*}
Finally, since 
$\sum_{c_1=1}^{\infty}c_1^{-k+\frac{3}{2}}<+\infty$ (if $k>\frac{5}{2}$),
we have
\begin{align*}
\Sigma_1
&=
\sum_{c_1=1}^{\infty}
\sum_{V\in G_2}
\sum_{U\in G_1}
\sum_{{d_1 \, \textup{mod} \, Nc_1}\atop {(d_1,Nc_1)=1, d_4=\pm 1}}
\sum_{d_2 \, \textup{mod} \, Nc_1} h_Q(M,T) \\
&\ll|T|^{k-\frac{3}{2}}
\sum_{c_1=1}^{\infty}\sum_{s_4=1}^{\infty}
A(s_4, T)s_4^{-k+\frac{3}{2}+\varepsilon}
N^{-k+\frac{3}{2}}
c_1^{-k+\frac{3}{2}}\\
&\ll
|T|^{k-\frac{3}{2}}
N^{-k+\frac{3}{2}}
\sum_{s_4=1}^{\infty}
A(s_4, T)s_4^{-k+\frac{3}{2}+\varepsilon}.
\end{align*}
Since $A(s_4,T)=O(s_4^{\varepsilon})$ (independent of $T$),
we now arrive at the following proposition.
\begin{proposition}\label{prop4-3}
If $k\geq 3$, then
$$
\Sigma_1
\ll |T|^{k-\frac{3}{2}}N^{\frac{3}{2}-k}.
$$
\end{proposition}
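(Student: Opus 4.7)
The plan is to assemble the bound by executing the chain of summations implicit in the parameterization of $\mathcal{H}_N^{(1)}$ provided by Lemma \ref{lem4-1}, using the explicit formula for $h_Q(M,T)$ from Lemma \ref{lem4-2}. First I would substitute that formula and take absolute values, retaining the Kronecker factor $\delta_{p_4, s_4}$, the Bessel function $J_{k-3/2}(4\pi\sqrt{|T||Q|}/(Nc_1 s_4))$, and the quadratic Gauss sum in $d_2$.

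Next, I would estimate the inner sums in succession. The sum over $d_2 \pmod{Nc_1}$ is a generalized quadratic Gauss sum, handled by \eqref{4-1} with bound $O((s_4, Nc_1)^{1/2}(Nc_1)^{1/2})$ after using $(a_1, Nc_1) = 1$. The sum over $d_1 \pmod{Nc_1}$ with $(d_1, Nc_1)=1$ contributes $\phi(Nc_1) = O(Nc_1)$, and the sum over $d_4 = \pm 1$ gives a constant. For the sum over $U \in G_1$, I would parameterize $U$ by its bottom row $(u_3, u_4)^t$ and observe that $\delta_{p_4, s_4} = \delta_{Q[u], s_4}$ by \eqref{PandQ}, so the number of surviving $u$ is $O(s_4^{\varepsilon})$ by the divisor-type bound on representations of $s_4$ by the fixed form $Q$. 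For the sum over $V \in G_2$, I would parameterize by its first column and convert the sum over $s_4$ into a weighted sum with weight $A(s_4, T)$.

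After these reductions, applying the Bessel estimate \eqref{4-7}(i), $J_{k-3/2}(x) = O(x^{k-3/2})$, yields an upper bound of the shape
\[
\Sigma_1 \ll |T|^{k-3/2} \sum_{c_1 \geq 1} \sum_{s_4 \geq 1} A(s_4, T)\, s_4^{-k+3/2+\varepsilon}\, (Nc_1)^{-k+3/2}.
\]
I would then factor the double sum: $\sum_{c_1} c_1^{-k+3/2}$ converges for $k > 5/2$, and since $A(s_4, T) = O(s_4^{\varepsilon})$ uniformly in $T$, the series $\sum_{s_4} s_4^{-k+3/2+2\varepsilon}$ also converges for $k > 5/2$. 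Extracting the factor $N^{3/2-k}$ from the $c_1$ sum gives the stated bound, valid under the hypothesis $k \geq 3$. The main technical point is the careful tracking of the $N$-dependence throughout: the modulus of the Gauss sum and of the residue classes for $d_1, d_2$ is $Nc_1$ rather than $c_1$, and it is precisely this that produces the factor $N^{3/2-k}$, giving the sharp level dependence which distinguishes the present estimate from Kitaoka's $N=1$ treatment.
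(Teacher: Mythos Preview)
Your proposal is correct and follows essentially the same route as the paper: the same parameterization from Lemma~\ref{lem4-1}, the same Gauss-sum bound \eqref{4-1} for the $d_2$-sum, the trivial bound for the $d_1$-sum, the $O(s_4^\varepsilon)$ count for $U$ via \eqref{PandQ}, the reparameterization of the $V$-sum through $A(s_4,T)$, and finally the Bessel estimate \eqref{4-7}(i) together with the trivial inequality $(s_4,Nc_1)^{1/2}\le s_4^{1/2}$ to reach the convergent double sum. The only step you left implicit is this last bound on $(s_4,Nc_1)^{1/2}$, but it is clear from the shape of your final expression.
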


This proposition is necessary for the proof of assertion (ii) of Theorem
\ref{th1-1}.   To prove assertion (iii) of Theorem \ref{th1-1}, we have to 
modify the above argument, 
using the both estimates of \eqref{4-7}.   That is, to evaluate the Bessel factor
in \eqref{4-6}, now we apply \eqref{4-7} (ii) if
$4\pi\sqrt{|T||Q|}\geq Nc_1 s_4$, and apply (i) if
$4\pi\sqrt{|T||Q|}< Nc_1 s_4$.   Then \eqref{4-6} is
$$
\ll |T|^{\frac{k}{2}-\frac{3}{4}}(S_1+S_2),
$$
where
\begin{align*}
S_1&=\sum_{s_4\leq \tau/Nc_1}s_4^{-\frac{1}{2}+\varepsilon}(s_4,Nc_1)^{\frac{1}{2}}
   A(s_4,T)|T|^{-\frac{1}{4}}(Nc_1 s_4)^{\frac{1}{2}},\\
S_2&=\sum_{s_4>\tau/Nc_1}s_4^{-\frac{1}{2}+\varepsilon}(s_4,Nc_1)^{\frac{1}{2}}
   A(s_4,T)|T|^{\frac{k}{2}-\frac{3}{4}}(Nc_1 s_4)^{-k+\frac{3}{2}},
\end{align*}
with $\tau=4\pi\sqrt{|T||Q|}$.   Therefore
\begin{align}\label{4-8}
\Sigma_1\ll |T|^{\frac{k}{2}-\frac{3}{4}}\sum_{c_1=1}^{\infty}(S_1+S_2).
\end{align}

\begin{lemma}\label{lem4-4} We have

\begin{align}\label{4-9}
\sum_{1\leq Nc_1\leq \tau/s_4}(s_4,Nc_1)^{\frac{1}{2}}(Nc_1)^{\frac{1}{2}}
\ll N^{-\frac{1}{2}+\varepsilon}\left(\frac{\tau}{s_4}\right)^{\frac{3}{2}}
s_4^{\varepsilon},
\end{align}
\begin{align}\label{4-10}
\sum_{Nc_1>\tau/s_4}(s_4,Nc_1)^{\frac{1}{2}}(Nc_1)^{\frac{3}{2}-k}
\ll N^{-\frac{1}{2}+\varepsilon}\left(\frac{\tau}{s_4}\right)^{\frac{5}{2}-k}
s_4^{\varepsilon}\qquad(k\geq 3).
\end{align}
\end{lemma}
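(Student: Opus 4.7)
The plan is to isolate the $N$-dependence via the identity
\[
(s_4, Nc_1) = h\cdot(s_4', c_1), \qquad h := (s_4, N),\ s_4' := s_4/h,
\]
which is verified by writing $N = hN'$ with $(s_4', N') = 1$, so that $(s_4, Nc_1) = h(s_4', N'c_1) = h(s_4', c_1)$. This lets us factor $h^{1/2}$ out of the gcd in each sum, and the trivial bound $h^{1/2}\leq N^{1/2}$ is what ultimately produces the $N^{-1/2}$ on the right-hand sides of \eqref{4-9}--\eqref{4-10}.

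For \eqref{4-9}, I would set $C := \tau/(Ns_4)$ so the condition $Nc_1\leq \tau/s_4$ becomes $c_1\leq C$, and then write
\[
\sum_{1\leq Nc_1\leq \tau/s_4}(s_4, Nc_1)^{1/2}(Nc_1)^{1/2}
= h^{1/2}N^{1/2}\sum_{c_1\leq C}(s_4', c_1)^{1/2}c_1^{1/2}
\leq h^{1/2}N^{1/2}C^{1/2}\sum_{c_1\leq C}(s_4', c_1)^{1/2}.
\]
Splitting the remaining sum by $d = (s_4', c_1)\mid s_4'$ and noting that $d\mid c_1$ allows at most $C/d$ values of $c_1\leq C$ gives $\sum_{c_1\leq C}(s_4', c_1)^{1/2}\leq C\sum_{d\mid s_4'}d^{-1/2}\ll Cs_4^{\varepsilon}$ by the divisor bound. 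Then $h^{1/2}N^{1/2}C^{3/2} = h^{1/2}N^{-1}(\tau/s_4)^{3/2}\leq N^{-1/2}(\tau/s_4)^{3/2}$ yields \eqref{4-9}.

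For \eqref{4-10}, the same factorization pulls out $h^{1/2}N^{3/2-k}$ and reduces matters to estimating $\sum_{c_1>C}(s_4', c_1)^{1/2}c_1^{3/2-k}$. Splitting again by $d=(s_4', c_1)$ and substituting $c_1 = dm$ turns this into $\sum_{d\mid s_4'}d^{2-k}\sum_{m>C/d}m^{3/2-k}$. Since $k\geq 3$ makes the exponent $3/2-k$ strictly less than $-1$, the tail satisfies $\sum_{m>X}m^{3/2-k}\ll X^{5/2-k}$; pulling out $C^{5/2-k}$ leaves a residual $\sum_{d\mid s_4'}d^{-1/2}\ll s_4^{\varepsilon}$. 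Assembling the factors, $h^{1/2}N^{3/2-k}C^{5/2-k} = h^{1/2}N^{-1}(\tau/s_4)^{5/2-k}\leq N^{-1/2}(\tau/s_4)^{5/2-k}$, which is \eqref{4-10}.

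The only non-routine step is the gcd identity itself; once it is available, both estimates collapse to standard divisor-sum manipulations. The main conceptual obstacle is recognizing that the $N^{-1/2}$ savings is invisible unless one peels off exactly the part of $s_4$ that is coprime to $N$ — the naive bound $(s_4, Nc_1)\leq s_4$ would lose a full power of $N$.
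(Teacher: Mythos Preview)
Your argument is correct, and in fact slightly sharper than the paper's: you obtain $N^{-1/2}$ rather than $N^{-1/2+\varepsilon}$ in both estimates. The route, however, is genuinely different. The paper does not isolate $h=(s_4,N)$ at the outset; instead it writes $r=(s_4,Nc_1)$, $Nc_1=rq$, and then decomposes further according to $\nu=(N,r)$, so that the divisibility $N\mid qr$ forces $q$ to be a multiple of $N/\nu$. Summing over $\nu\mid N$ and $r\mid s_4$ with $\nu\mid r$ produces the factor $N^{-1}\sigma_{1/2}(N)\,d(s_4)\ll N^{-1/2+\varepsilon}s_4^{\varepsilon}$. Your single identity $(s_4,Nc_1)=h\,(s_4',c_1)$ collapses this two-step decomposition into one line and replaces the averaged bound $N^{-1}\sigma_{1/2}(N)$ by the pointwise bound $h^{1/2}/N\le N^{-1/2}$, which is why you save the $\varepsilon$ on $N$. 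The paper's approach has the minor advantage of not requiring the verification that $(s_4/h,\,N/h)=1$ (which of course is immediate from the definition of $\gcd$), but your approach is cleaner and gives more.
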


\begin{proof}
When $N=1$, this is Kitaoka's Lemma 2 (\cite{Ki}, p.163).   As for \eqref{4-9}, 
first we write $(s_4,Nc_1)=r$ and $Nc_1=rq$ to obtain 
$$
\sum_{1\leq Nc_1\leq \tau/s_4}(s_4,Nc_1)^{\frac{1}{2}}(Nc_1)^{\frac{1}{2}}
=\sum_{r|s_4}r \sum_{q\leq \tau/s_4 r, N|qr}q^{\frac{1}{2}}.
$$
Put $(N,r)=\nu$, and write $N=\nu N'$.   Then $N|qr$ implies $N'|q$, so we can
write $q=N'q'$.   Therefore the above is
\begin{align*}
&=\sum_{r|s_4}r\sum_{q'\leq \tau/s_4 rN'}(N'q')^{\frac{1}{2}}\\
&\ll \sum_{r|s_4}r\frac{1}{N'}\left(\frac{\tau}{s_4 r}\right)^{\frac{3}{2}}\\
&=\left(\frac{\tau}{s_4}\right)^{\frac{3}{2}}\sum_{\nu|N}\frac{1}{N'}
    \sum_{r|s_4,r\equiv 0({\rm mod}\;\nu)}r^{-\frac{1}{2}}\\
&\leq \left(\frac{\tau}{s_4}\right)^{\frac{3}{2}}\sum_{\nu|N}\frac{1}{N'}
    \nu^{-\frac{1}{2}}d(s_4)\\
&= \left(\frac{\tau}{s_4}\right)^{\frac{3}{2}}\frac{1}{N}\sigma_{1/2}(N)d(s_4)
   \ll N^{-\frac{1}{2}+\varepsilon}\left(\frac{\tau}{s_4}\right)^{\frac{3}{2}}
   s_4^{\varepsilon},
\end{align*} 
where $\sigma_{1/2}(N)=\sum_{d|N}d^{1/2}$.
The proof of \eqref{4-10} is similar, but the condition $k\geq 3$ is necessary
in the course of the proof to assure the convergence of a relevant series.
\end{proof}

Using \eqref{4-9} and $A(s_4,T)\ll s_4^{\varepsilon}$, we have
\begin{align*}
\sum_{c_1=1}^{\infty}S_1&\ll |T|^{-\frac{1}{4}}\sum_{s_4\leq \tau/N}
  \sum_{c_1\leq \tau/s_4 N}(s_4,Nc_1)^{\frac{1}{2}}(Nc_1)^{\frac{1}{2}}
  s_4^{\varepsilon}\\
&\ll |T|^{-\frac{1}{4}}N^{-\frac{1}{2}+\varepsilon}\sum_{s_4\leq \tau/N} 
  \left(\frac{\tau}{s_4}\right)^{\frac{3}{2}}s_4^{\varepsilon}\\
&\ll |T|^{\frac{1}{2}}N^{-\frac{1}{2}+\varepsilon}.
\end{align*}
Also, using \eqref{4-10}, 
\begin{align*}
\sum_{c_1=1}^{\infty}S_2&\ll |T|^{\frac{k}{2}-\frac{3}{4}}\sum_{s_4=1}^{\infty}
  s_4^{-k+1+\varepsilon}\sum_{c_1>\tau/s_4 N}(s_4,Nc_1)^{\frac{1}{2}}
  (Nc_1)^{\frac{3}{2}-k}\\
&\ll |T|^{\frac{k}{2}-\frac{3}{4}}N^{-\frac{1}{2}+\varepsilon}
  \sum_{s_4=1}^{\infty}s_4^{-k+1+\varepsilon}\left(\frac{\tau}{s_4}\right)^
  {\frac{5}{2}-k}\\
&\ll |T|^{\frac{1}{2}}N^{-\frac{1}{2}+\varepsilon}.
\end{align*}
Therefore, from \eqref{4-8} we obtain

\begin{proposition}\label{prop4-5}
If $k\geq 3$, then
$$
\Sigma_1\ll |T|^{\frac{k}{2}-\frac{1}{4}}N^{-\frac{1}{2}+\varepsilon}.
$$
\end{proposition}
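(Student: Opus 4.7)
The plan is to combine the decomposition \eqref{4-8} with the two estimates of Lemma \ref{lem4-4}. The pieces $\sum_{c_1} S_1$ and $\sum_{c_1} S_2$ must be handled separately, and in each case the key move is to swap the order of summation so that the inner sum is over $c_1$ and falls directly under the scope of Lemma \ref{lem4-4}. Throughout, I would use the bound $A(s_4,T)\ll s_4^{\varepsilon}$ to absorb the dependence on $v_1,v_2$ into an $s_4^{\varepsilon}$ factor.

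For $\sum_{c_1=1}^{\infty}S_1$, first I would interchange the $s_4$ and $c_1$ summations, noting that the constraint $s_4\le \tau/(Nc_1)$ appearing in the definition of $S_1$ is equivalent to $Nc_1\le \tau/s_4$. The inner $c_1$-sum then matches precisely the left-hand side of \eqref{4-9}, and plugging that in yields a factor $N^{-1/2+\varepsilon}(\tau/s_4)^{3/2}s_4^{\varepsilon}$. Combined with the remaining $s_4^{-1/2+\varepsilon}$ and the $|T|^{-1/4}$ factor from $S_1$, a routine $s_4$-sum over $s_4\le \tau/N$ produces a bound of the form $|T|^{1/2}N^{-1/2+\varepsilon}$.

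For $\sum_{c_1=1}^{\infty}S_2$, the same interchange reduces the inner sum to the left-hand side of \eqref{4-10}, whose convergence requires exactly $k\ge 3$. Multiplying by the remaining $s_4$-dependent factors, the exponent of $s_4$ becomes $-k+1+\varepsilon+(k-\tfrac{5}{2})=-\tfrac{3}{2}+\varepsilon$ after extracting $(\tau/s_4)^{5/2-k}$, giving an absolutely convergent $s_4$-series that again contributes $|T|^{1/2}N^{-1/2+\varepsilon}$ after accounting for $\tau^{5/2-k}\asymp|T|^{5/4-k/2}$ against the overall $|T|^{k/2-3/4}$ prefactor of $S_2$.

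Substituting these two bounds into \eqref{4-8} and multiplying by the outer $|T|^{k/2-3/4}$ gives the desired estimate $\Sigma_1\ll|T|^{k/2-1/4}N^{-1/2+\varepsilon}$. The main obstacle is purely bookkeeping: one must verify that the several exponents of $|T|$, $N$, $s_4$, and $\tau$ combine correctly in both ranges, and check that the exponent of $s_4$ in the $S_2$-part yields a convergent series, which is precisely where the hypothesis $k\ge 3$ enters.
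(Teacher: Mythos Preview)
Your proposal is correct and follows exactly the paper's argument: swap the order of summation in $\sum_{c_1}S_1$ and $\sum_{c_1}S_2$, apply \eqref{4-9} and \eqref{4-10} of Lemma~\ref{lem4-4} respectively (the latter being where $k\ge 3$ is needed), use $A(s_4,T)\ll s_4^{\varepsilon}$, and verify that each piece contributes $|T|^{1/2}N^{-1/2+\varepsilon}$ before multiplying by the outer $|T|^{k/2-3/4}$. One small bookkeeping point: in the $S_1$ part you mention ``the remaining $s_4^{-1/2+\varepsilon}$'' but should also account for the $s_4^{1/2}$ coming from $(Nc_1 s_4)^{1/2}$, so that the net $s_4$-exponent in the final sum is $-3/2+\varepsilon$ (after extracting $(\tau/s_4)^{3/2}$), which is what makes that sum converge.
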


\section{The case of $\rank C=2$}

The basic fact for the case $\rank C=2$ is the following lemma,
which is Kitaoka's Lemma 5 (\cite{Ki}, p.159) when $N=1$.

\begin{lemma}\label{lem5-1}
As $\HH_N^{(2)}$, we can choose
$$
\left\{ 
\left.
M=
\begin{pmatrix}
* & * \\
NC & D
\end{pmatrix}
\in \Spg(2,\Z )
\ \right| \
|C|\neq 0, D \, \textup{mod} \, NC\Lambda
\right\}
$$
and $\theta (M)=\{0\}$.
\end{lemma}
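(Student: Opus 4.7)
The plan is to follow Kitaoka's argument \cite{Ki} for the full modular case and track how the level $N$ enters. For $M = \begin{pmatrix} A & B \\ C_0 & D \end{pmatrix} \in \Gamma_0^{(2)}(N)$, the congruence $C_0 \equiv 0 \,\textup{mod}\, N$ forces $C_0 = NC$ for some $C \in M_2(\Z)$, and $\rank C_0 = 2$ becomes $|C|\neq 0$.

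First I would analyze how $\Gamma_1^{(2)}(\infty)$ acts on $M$. The left action of $\begin{pmatrix} 1_2 & S \\ 0 & 1_2 \end{pmatrix}$ fixes the bottom row $(C_0, D)$ and only modifies $(A, B)$, while the right action of $\begin{pmatrix} 1_2 & S' \\ 0 & 1_2 \end{pmatrix}$ fixes $C_0$ and sends $D \mapsto D + C_0 S' = D + NCS'$. As $S'$ ranges over $\Lambda$, the translations $NCS'$ fill out the lattice $NC\Lambda$, so the double coset invariants of $M$ are exactly the matrix $C$ together with the class of $D$ modulo $NC\Lambda$. Conversely, given a pair $(C, D)$ with $|C|\neq 0$ and with $(NC, D)$ extendable to an element of $\Spg(2,\Z)$ (requiring the symplectic compatibility $C_0 \tr{D} = D \tr{C_0}$ together with the usual coprimality condition), the top row $(A, B)$ is determined uniquely up to the left action of $\Gamma_1^{(2)}(\infty)$. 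This exhibits the stated set as a complete system of representatives for $\HH_N^{(2)}$.

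Next, for the claim $\theta(M) = \{0\}$, I would compute $M \begin{pmatrix} 1_2 & S \\ 0 & 1_2 \end{pmatrix} M^{-1}$ using the explicit form $M^{-1} = \begin{pmatrix} \tr{D} & -\tr{B} \\ -\tr{C_0} & \tr{A} \end{pmatrix}$ together with the symplectic identities $A\tr{D} - B\tr{C_0} = 1_2$ and $C_0 \tr{D} = D \tr{C_0}$. The lower-left block of the resulting product simplifies to $-C_0 S \tr{C_0} = -N^2 C S \tr{C}$. Membership of the product in $\Gamma_1^{(2)}(\infty)$ requires this block to vanish, and since $|C| \neq 0$ one can multiply by $C^{-1}$ on the left and by $(\tr{C})^{-1}$ on the right (over $\Q$) to conclude $S = 0$.

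The main obstacle is purely bookkeeping: the substantive content is essentially identical to Kitaoka's proof in the $N = 1$ case, with the only modification being the systematic substitution of the bottom-left block by $NC$, which correspondingly rescales the relevant translation lattice from $C\Lambda$ to $NC\Lambda$.
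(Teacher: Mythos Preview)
Your proposal is correct and is precisely the argument the paper relies on: the paper gives no proof of its own here, merely noting that the statement reduces to Kitaoka's Lemma~5 when $N=1$, with the generalization being the obvious substitution $C_0 = NC$. Your computations for the double-coset parametrization and for $\theta(M)=\{0\}$ (via the lower-left block $-C_0 S\,\tr{C_0}$) are exactly what is needed to fill in that reference.
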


The condition $\rank C=2$ is equivalent to $|C|\neq 0$.   For the set of such
matrices, Kitaoka proved:

\begin{lemma}[Kitaoka \cite{Ki}, p.164, Lemma 1]\label{lem5-2}
\begin{align*}
\lefteqn{\left\{
C \in M_2 (\Z) \ | \ |C|\neq 0
\right\}}\\
&=\left\{
\left.
U^{-1}{\rm diag}(c_1,c_2)V^{-1}
\ \right| \
U\in \operatorname{GL}(2,\Z),
V\in \operatorname{GL}(2,\Z)/P(c_2/c_1),
0<c_1 |c_2
\right\}
\end{align*}
where
${\rm diag}(c_1,c_2)=
\begin{pmatrix}   
c_1 & 0 \\                                                                             
0 & c_2                                                                               
\end{pmatrix}
$
and
$$
P(n)=
\left\{
\left.
\begin{pmatrix}
a & b \\
c & d
\end{pmatrix}
\in \operatorname{GL}(2,\Z)
\ \right| \
b\equiv 0 \, \textup{mod} \, n
\right\}
.$$
\end{lemma}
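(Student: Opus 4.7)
The plan is to derive this from the Smith normal form theorem over $\Z$, combined with an explicit computation of the stabilizer of $D:=\mathrm{diag}(c_1,c_2)$ under the action $(g,h)\cdot D=gDh$ of $\operatorname{GL}(2,\Z)\times\operatorname{GL}(2,\Z)$ on $M_2(\Z)$. The inclusion $\{C:|C|\neq 0\}\subseteq\{U^{-1}\mathrm{diag}(c_1,c_2)V^{-1}\mid\cdots\}$ follows immediately from Smith normal form: elementary row and column operations (encoded by left and right multiplication by elements of $\operatorname{GL}(2,\Z)$) reduce any $C\in M_2(\Z)$ with $|C|\neq 0$ to $\mathrm{diag}(c_1,c_2)$, with $c_1$ the gcd of the entries of $C$ and $c_1c_2=|C|$, so automatically $0<c_1\mid c_2$; the reverse inclusion is trivial, since any such $C$ has $|C|=\pm c_1c_2\neq 0$.

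The substantive step is identifying the correct parameter space for $V$. I would do this by comparing two decompositions $U_1^{-1}DV_1^{-1}=U_2^{-1}DV_2^{-1}$. Setting $g=U_2U_1^{-1}$ and $h=V_1^{-1}V_2$, this is equivalent to $gDh=D$, i.e., $g=Dh^{-1}D^{-1}$. Writing $h^{-1}=\begin{pmatrix}a&b\\c&d\end{pmatrix}$ and multiplying out,
\[
Dh^{-1}D^{-1}=\begin{pmatrix}a&(c_1/c_2)b\\(c_2/c_1)c&d\end{pmatrix}.
\]
The lower-left entry is automatically integral since $c_1\mid c_2$, while the upper-right is integral iff $(c_2/c_1)\mid b$, i.e., $h^{-1}\in P(c_2/c_1)$, equivalently $h\in P(c_2/c_1)$ (the subgroup is closed under inversion). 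Conversely, any such $h$ produces $g\in M_2(\Z)$ with $\det g=\pm 1$, so $g\in\operatorname{GL}(2,\Z)$. Hence the set of $V$'s realizing a fixed $C$ with elementary divisors $(c_1,c_2)$ is exactly one right coset $V_1\cdot P(c_2/c_1)$, and running $V$ over $\operatorname{GL}(2,\Z)/P(c_2/c_1)$ (with $U$ determined by the chosen representative) enumerates each $C$ exactly once.

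Since the lemma is classical, no step is genuinely hard. The main subtlety is bookkeeping in the stabilizer computation: getting the side of the coset right and matching the integrality condition to the definition of $P(c_2/c_1)$ in the statement (divisibility of the upper-right entry).
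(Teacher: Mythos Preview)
Your argument is correct. The paper does not supply its own proof of this lemma; it is quoted verbatim from Kitaoka \cite{Ki} (p.164, Lemma~1) and used as a black box. Your approach via Smith normal form together with the explicit stabilizer computation is the standard one and is precisely how such a parametrization is established; in particular your identification of the stabilizer $\{(g,h):gDh=D\}$ with $P(c_2/c_1)$ via $h\mapsto (Dh^{-1}D^{-1},h)$ is exactly what is needed to see that the map $(U,[V],c_1,c_2)\mapsto U^{-1}\mathrm{diag}(c_1,c_2)V^{-1}$ is a bijection onto $\{C:|C|\neq 0\}$. One small clarification of your last sentence: in the lemma $U$ is a free parameter ranging over all of $\operatorname{GL}(2,\Z)$, not something ``determined'' by $V$; what your stabilizer computation actually shows is that for a \emph{given} $C$ there is a unique pair $(U,[V])$, which is the same thing said from the other side.
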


The starting point of the argument is another formula of 
Kitaoka, stated in p.166 of \cite{Ki}, which is
\begin{align*}
h_Q(M,T)=&2^{-1}\pi^{-4}
\left( \frac{|T|}{|Q|} \right)^{\frac{k}{2}-\frac{3}{4}}
||NC||^{-\frac{3}{2}}e(\trace (AC^{-1}Q+C^{-1}DT)/N)\\
&\times\int_0^1 \prod_{i=1}^2 J_{k-\frac{3}{2}}(4\pi s_i u)u(1-u^2)^{-\frac{1}{2}}du,
\end{align*}
where $s_1$ and $s_2$ are positive numbers such that $s_1^2$ and $s_2^2$
are the eigenvalues of the matrix $P_0=T\left[\sqrt{ Q[\tr{(NC)}^{-1}]}\right]$.
(The symbol $||NC||$ means simply the absolute value of the determinant $|NC|$.)
Then
\begin{align}\label{5-1}                                                      
\sum_{D \, \textup{mod} \, NC\Lambda}h_Q(M,T)                                          
=&2^{-1}\pi^{-4}\left( \frac{|T|}{|Q|} \right)^{\frac{k}{2}-\frac{3}{4}}               
||NC||^{-\frac{3}{2}}K(Q,T;NC)\\                                                       
&\times\int_0^1 \prod_{i=1}^2 J_{k-\frac{3}{2}}(4\pi s_i u)u(1-u^2)^{-\frac{1}{2}}du,  
\notag                                                                                 
\end{align}
where
$$
K(Q,T;C)=\sum_{D}e(\trace (AC^{-1}Q+C^{-1}DT))
,$$
with $D$ running over
$$
\left\{
D ({\rm mod} C\Lambda) \in M_2(\Z)\left|
\begin{pmatrix}
A & B \\
C & D
\end{pmatrix}
\in \Spg(2,\Z)
\right.
\right\}
.$$
This $K(Q,T;C)$ is a kind of generalized Kloosterman's sum, introduced and studied
by Kitaoka \cite{Ki}.
In particular, Kitaoka proved:
 
\begin{lemma}[Kitaoka \cite{Ki}, p.150, Proposition 1]\label{lem5-3}
Let $C\in M_2(\Z)$ such that $|C|\neq 0$ and
$
C=U^{-1}
\begin{pmatrix}
c_1 & 0 \\
0 & c_2
\end{pmatrix}
V^{-1}
$, $U,V \in \operatorname{GL}(2,\Z)$,
$0<c_1|c_2$. Then for $P,T\in \Lambda^*$ we have
$$
K(P,T;C)=O(c_1^2 c_2^{\frac{1}{2}+\varepsilon}(c_2, t_4)^{\frac{1}{2}}),
$$
where $\varepsilon$ is any positive number and $t_4$ is the $(2,2)$-entry of $T[V]$.
Moreover, $K(P,T;C)=K(T,P;\tr{C})$ holds.
\end{lemma}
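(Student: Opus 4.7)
The plan is to mirror Kitaoka's original argument while tracking the parameters carefully. First I would use Lemma \ref{lem5-2} to reduce to the case $C=\mathrm{diag}(c_1,c_2)$. Under the substitution $C\mapsto U^{-1}\mathrm{diag}(c_1,c_2)V^{-1}$, the pair $(A,D)$ transforms accordingly inside $\Spg(2,\Z)$, and a direct calculation shows that
$$K(P,T;C)=K\bigl(P[\tr{U}^{-1}],\,T[V];\,\mathrm{diag}(c_1,c_2)\bigr)$$
(up to at worst a unimodular change that does not affect absolute values). This is precisely the reduction that makes the invariant $t_4$, the $(2,2)$-entry of $T[V]$, the natural quantity to appear in the final bound.

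Second, for $C=\mathrm{diag}(c_1,c_2)$ I would write out the symplectic conditions
$\tr{A}C=\tr{C}A$, $A\tr{D}-B\tr{C}=1_2$, $D\tr{B}=B\tr{D}$
on the entries of $A$ and $D$. The condition $D\pmod{C\Lambda}$ lets one choose independent residue classes for the entries of $D$, the ``diagonal'' entry sitting modulo $c_2$ with coefficient $t_4$ (this is how the dependence on $t_4$ enters), while the off-diagonal entries live modulo $c_1$ or $c_2$ with coefficients determined by other entries of $T$.

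Third, after expanding $\trace(AC^{-1}P+C^{-1}DT)$ in entries, the exponential sum factors (after summing $A$ off the diagonal freely) into a product of one-dimensional generalized quadratic Gauss sums of the form
$$\sum_{d\bmod c_2}e\!\left(\frac{a d^{2}+b d}{c_2}\right)$$
and similar, together with an inner sum modulo $c_1$. The factor $c_1^{2}$ comes from trivially bounding the inner sums modulo $c_1$, where there is no useful cancellation, while $c_2^{1/2+\varepsilon}(c_2,t_4)^{1/2}$ comes from applying the Gauss-sum bound $G(a,b,c)=O((a,c)^{1/2}c^{1/2})$ from \eqref{4-1} to the $c_2$-sum, with the quadratic coefficient linked to $t_4$; the $\varepsilon$ absorbs the divisor-type loss when $(a,c_2)>1$. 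Finally, the symmetry $K(P,T;C)=K(T,P;\tr{C})$ is an algebraic identity: applying the involution
$\begin{pmatrix}A & B\\ C & D\end{pmatrix}\mapsto\begin{pmatrix}\tr{D} & -\tr{B}\\ -\tr{C} & \tr{A}\end{pmatrix}$ (an element of $\Spg(2,\Z)$) swaps the roles of $(A,D)$ and $(\tr{D},\tr{A})$ while replacing $C$ by $\tr{C}$ and $P$, $T$ by $T$, $P$, after a change of summation variable.

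The main obstacle will be the second step: organizing the parameterization of $(A,D)\bmod C\Lambda$ so that, after inner summation over the ``free'' entries of $A$, the remaining sum over $D$ really does reduce to quadratic Gauss sums in a single residue variable with coefficient proportional to $t_4$. Once this is set up cleanly, the bound is bookkeeping with \eqref{4-1}; the rest of the argument is formal.
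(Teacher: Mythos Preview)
The paper does not prove this lemma at all; it is quoted verbatim as Kitaoka's Proposition~1 (\cite{Ki}, p.~150) and used as a black box in the subsequent estimates. So there is no proof in the present paper to compare your sketch against.

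That said, your outline matches the architecture of Kitaoka's original argument in two of its three parts: the reduction to diagonal $C$ via the transformation law $K(P,T;C)=K(P[\tr{U}^{-1}],T[V];\mathrm{diag}(c_1,c_2))$, and the symmetry $K(P,T;C)=K(T,P;\tr{C})$ via the symplectic involution, are both exactly as in \cite{Ki}. Where your sketch likely goes astray is Step~3. In the degree-$1$ analogue the inner sum is a genuine Kloosterman sum $\sum_{d}e((m\bar d+nd)/c)$, not a quadratic Gauss sum, and the square-root saving comes from Weil's bound. Kitaoka's degree-$2$ argument likewise reduces, after the parametrization you describe, to one-dimensional \emph{Kloosterman} sums modulo $c_2$ (with $t_4$ appearing as one of the arguments), and the factor $c_2^{1/2+\varepsilon}(c_2,t_4)^{1/2}$ is precisely the Weil bound $|S(m,n;c)|\le (m,n,c)^{1/2}c^{1/2}d(c)$. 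The quadratic Gauss-sum estimate \eqref{4-1} is used in this paper only for the $\mathrm{rank}\,C=1$ contribution in Section~4, not here. Your overall plan is sound, but to make Step~3 go through you should replace ``quadratic Gauss sums and \eqref{4-1}'' by ``Kloosterman sums and Weil's bound''; the relation between $A$ and $D$ imposed by the symplectic conditions is an inversion modulo $C$, not a quadratic one.
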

By this lemma, we find that the right-hand side of \eqref{5-1} is
\begin{align}\label{5-2}
\ll
|T|^{\frac{k}{2}-\frac{3}{4}}
||NC||^{-\frac{3}{2}}(Nc_1)^2(Nc_2)^{\frac{1}{2}+\varepsilon}(Nc_2,t_4)^{\frac{1}{2}}
\left|
\int_0^1 \prod_{i=1}^2 J_{k-\frac{3}{2}}(4\pi s_i u)u(1-u^2)^{-\frac{1}{2}}du
\right|
\end{align}
(because $Q$ is fixed).

Kitaoka (\cite{Ki}, p.166) showed that
\begin{align}\label{5-3}
\int_0^1 \prod_{i=1}^2 J_{k-\frac{3}{2}}(4\pi s_i u)u(1-u^2)^{-\frac{1}{2}}du
\ll
\begin{cases}
{\rm (a)} & |P|^{\frac{k}{2}-\frac{3}{4}},\\
{\rm (b)} & |P|^{-\frac{1}{4}},\\
{\rm (c)} & |P|^{\frac{k}{2}-\frac{3}{4}}(\trace (P))^{\frac{1-k}{2}},
\end{cases}
\end{align}
where $P=T\cdot Q[\tr{(NC)}^{-1}]$.
Kitaoka stated the above (a) in case $\trace (P)<1$, (b) in case
$\trace (P)<2|P|$, and (c) otherwise (which are sufficient for his purpose), 
but actually the above estimates themselves are valid without such conditions.
This is because the estimates \eqref{4-7} are true for any $x>0$.
In fact, applying \eqref{4-7} (i) to the both Bessel factors of the left-hand side
of \eqref{5-3}, and noting $(s_1 s_2)^2=|P_0|=|P|$, we obtain the estimate (a).   
Applying \eqref{4-7} (ii) to the
both Bessel factors we obtain (b).   Applying \eqref{4-7} (i) to the Bessel factor
with smaller eigenvalue $s_i$, and applying \eqref{4-7} (ii) to the other Bessel
factor, we obtain (c).
 
We first use only (a) and (c) of \eqref{5-3} to obtain an estimate 
which is sharp with respect to $N$.
It is possible to find a suitable $U_1\in \operatorname{GL}(2,\Z)$ for which
$A=T[V \textup{diag}(c_1, c_2 )^{-1}U_1]$
is Mikowski-reduced.   We may write $C$ in Lemma \ref{lem5-2} as 
$$
C=U^{-1}U_1^{-1}\textup{diag}(c_1,c_2)V^{-1}.
$$
Then we have
$$
|P|=|T\cdot Q[\tr{(NC)}^{-1}]|=N^{-4}|Q|\cdot |A|\asymp N^{-4}|A|
$$
and
$$
\trace P=
\trace (T\cdot Q[\tr{(NC)}^{-1}])\asymp \trace (T\cdot 1_2 [\tr{(NC)}^{-1}])
=N^{-2}\trace (A[U]).
$$
Therefore from \eqref{5-2} we have
\begin{align}\label{5-4}
&\sum_{U\in \operatorname{GL}(2,\Z)}
\left|
\sum_{D \, \textup{mod}\, NC\Lambda}h_Q(M,T)
\right| \\
&\ll |T|^{\frac{k}{2}-\frac{3}{4}}
||NC||^{-\frac{3}{2}}(Nc_1)^2(Nc_2)^{\frac{1}{2}+\varepsilon}(Nc_2,t_4)^{\frac{1}{2}}
\left\{
\sum_{U\in \operatorname{GL}(2,\Z), \trace (A[U])\ll 1}
(N^{-4}|A|)^{\frac{k}{2}-\frac{3}{4}}\right. \notag\\
&+\sum_{U\in \operatorname{GL}(2,\Z), \trace (A[U])\ll |A|}
(N^{-4}|A|)^{\frac{k}{2}-\frac{3}{4}}
+\left. \sum_{U\in \operatorname{GL}(2,\Z), \textup{otherwise}}
(N^{-4}|A|)^{\frac{k}{2}-\frac{3}{4}}
(N^{-2}\trace (A[U]))^{\frac{1-k}{2}}
\right\},\notag
\end{align}
where we applied \eqref{5-3} (a) to the first and third sums, and (c) to the
second sum. 

Kitaoka proved (\cite{Ki}, p.167, Lemma 2) that if $A$ is Minkowski-reduced, then
\begin{align}\label{5-5}
&\sum_{U\in \operatorname{GL}(2,\Z), \trace (A[U])\ll 1}|A|^{\frac{k}{2}-\frac{3}{4}}
+\sum_{U\in \operatorname{GL}(2,\Z), \trace (A[U])\ll |A|}|A|^{-\frac{1}{4}}
+\sum_{U\in \operatorname{GL}(2,\Z), \textup{otherwise}} 
   |A|^{\frac{k}{2}-\frac{3}{4}}\trace (A[U])^{\frac{1-k}{2}}\\
&\quad\ll m(A)^{\varepsilon}\operatorname{max}(1, |A|)^{\frac{3-k}{2}+\varepsilon}
|A|^{\frac{k}{2}-\frac{5}{4}-\varepsilon},\notag 
\end{align}
where $m(A)=\operatorname{min}\{ A[x] |   x\in \Z^2 , x\neq (0,0) \}$.
Using \eqref{5-5}, we find that the first and the third sums on the right-hand side
of \eqref{5-4} are
\begin{align*}
&\ll
(N^{-2k+3}+N^{-k+2})
m(A)^{\varepsilon}\operatorname{max}(1, |A|)^{\frac{3-k}{2}+\varepsilon}
|A|^{\frac{k}{2}-\frac{5}{4}-\varepsilon}\\
&\ll N^{-k+2}m(A)^{\varepsilon}\operatorname{max}(1, |A|)^{\frac{3-k}{2}+\varepsilon} 
|A|^{\frac{k}{2}-\frac{5}{4}-\varepsilon}.
\end{align*}
On the other hand, in the proof of the above Lemma 2 of Kitaoka, he proved
(\cite{Ki}, p.168, line 7) the following 
\begin{lemma}\label{lem5-4}
\begin{align*}
\# \{ U\in \operatorname{GL}(2,\Z) \ | \ \trace (A[U])\ll |A| \} 
&\ll
\# \{ U\in \operatorname{GL}(2,\Z) \ | \ \trace (H[U])\ll |H| \} \\
&\ll
c^{\frac{1}{2}}a^{\frac{1}{2}+\varepsilon},
\end{align*}
where we set
$H=
\begin{pmatrix}
a & 0\\
0 & c
\end{pmatrix}
$
for
$A=
\begin{pmatrix}
a & b\\
b & c
\end{pmatrix}
$.
In particular,
$$
\# \{ U\in \operatorname{GL}(2,\Z) \ | \ \trace (A[U])\ll |A| \}  \ll 
|A|^{\frac{1}{2}+\varepsilon}
$$
if $A$ is Minkowski-reduced.
\end{lemma}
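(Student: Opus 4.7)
My plan is to prove the two inequalities separately, the first by comparing $A$ with $H$ and the second by a direct lattice-point count exploiting the determinant constraint $\det U=\pm 1$.

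For the first inequality I would use the standing Minkowski-reduction assumption $2|b|\le a\le c$. For any $v=\tr{(v_1,v_2)}\in\Z^2$ the elementary bound
\[
|A[v]-H[v]|=2|b|\,|v_1v_2|\le a|v_1v_2|\le \tfrac{a}{2}(v_1^2+v_2^2)\le \tfrac{1}{2}H[v]
\]
(the last step uses $a\le c$) gives $\tfrac{1}{2}H[v]\le A[v]\le \tfrac{3}{2}H[v]$. Summed over the two columns of $U$ this yields $\trace(A[U])\asymp \trace(H[U])$, and also $|A|=ac-b^2\in[\tfrac{3}{4}ac,ac]=[\tfrac{3}{4}|H|,|H|]$. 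After absorbing constants the conditions $\trace(A[U])\ll|A|$ and $\trace(H[U])\ll|H|$ then define the same set of $U$, establishing the first $\ll$.

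For the second inequality I would write $U=\begin{pmatrix}\alpha&\beta\\\gamma&\delta\end{pmatrix}$. Since $H$ is diagonal, the condition $a(\alpha^2+\beta^2)+c(\gamma^2+\delta^2)\ll ac$ forces each non-negative summand to be $\ll ac$, i.e.\ $|\alpha|,|\beta|\ll c^{1/2}$ and $|\gamma|,|\delta|\ll a^{1/2}$. The bottom row $(\gamma,\delta)$ is then a primitive integer vector (since $\gcd(\gamma,\delta)\mid\det U=\pm 1$); once it is fixed, the admissible top rows $(\alpha,\beta)$ with $\alpha\delta-\beta\gamma=\pm 1$ form two cosets of the rank-one sublattice $\Z\cdot(\gamma,\delta)\subset\Z^2$, and intersected with the box $|\alpha|,|\beta|\ll c^{1/2}$ each coset contributes $\ll c^{1/2}/\max(|\gamma|,|\delta|)+1$ lattice points (the degenerate bottom rows $(\pm 1,0)$ and $(0,\pm 1)$ contribute only $O(c^{1/2})$). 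Grouping primitive bottom rows by $m=\max(|\gamma|,|\delta|)$, of which there are $O(m)$ for each $m\ge 1$, the total count is
\[
\ll \sum_{m=1}^{O(a^{1/2})} m\left(\frac{c^{1/2}}{m}+1\right) \ll a^{1/2}c^{1/2}+a \ll c^{1/2}a^{1/2+\varepsilon},
\]
where the last step uses $a\le c$ to bound $a\le a^{1/2}c^{1/2}$. The ``in particular'' clause follows because $|A|\asymp ac$ in the Minkowski-reduced case, so $c^{1/2}a^{1/2+\varepsilon}\ll(ac)^{1/2+\varepsilon}\ll |A|^{1/2+\varepsilon}$.

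I do not anticipate a substantive obstacle; the argument is essentially a lattice-point count controlled by the unimodular condition on $U$. The only delicate point is that the comparison $\trace(A[U])\asymp \trace(H[U])$ genuinely requires $2|b|\le a\le c$, and without Minkowski-reduction the first inequality can fail, so I would make the reduction assumption explicit at the outset of the proof.
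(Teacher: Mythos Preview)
Your argument is correct. The paper itself does not supply a proof of this lemma; it merely records the statement as something Kitaoka established in the course of proving his Lemma~2 (\cite{Ki}, p.~168). Your approach---comparing $A[v]$ with $H[v]$ via $2|b|\le a\le c$ to reduce to the diagonal case, and then counting $U$ by fixing the primitive bottom row and using $\det U=\pm1$ to confine the top row to a one-dimensional coset inside a box---is the natural elementary route and is essentially what Kitaoka does. Your observation that the Minkowski-reduction hypothesis is needed already for the first inequality (not just for the ``in particular'' clause) is well taken; in the paper's application $A$ is always taken Minkowski-reduced immediately before the lemma is invoked, so this is consistent with the context. Incidentally, your count actually yields $a^{1/2}c^{1/2}$ without the $\varepsilon$, slightly sharper than the stated $c^{1/2}a^{1/2+\varepsilon}$.
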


Therefore, the second sum on the right-hand side of \eqref{5-4} is
\begin{align*}
\ll
N^{-2k+3} |A|^{\frac{k}{2}-\frac{3}{4}}
\sum_{U\in \operatorname{GL}(2,\Z),\trace (A[U])\ll |A|}1
\ll N^{-2k+3} |A|^{\frac{k}{2}-\frac{1}{4}+\varepsilon}.
\end{align*}

Collecting the above estimates, and noting $||NC||=N^2 |c_1 c_2|$, 
from \eqref{5-4} we obtain
\begin{align}\label{5-6}
&\sum_{U\in \operatorname{GL}(2,\Z)}
\left|
\sum_{D \, \textup{mod}\, NC\Lambda}h_Q(M,T)
\right| 
\ll
|T|^{\frac{k}{2}-\frac{3}{4}}
N^{-\frac{1}{2}+\varepsilon}|c_1|^{\frac{1}{2}}
|c_2|^{-1+\varepsilon}(Nc_2, t_4)^{\frac{1}{2}}\\
&\;\;\times \left\{ N^{-2k+3}|A|^{\frac{k}{2}-\frac{1}{4}+\varepsilon}
+N^{-k+2}m(A)^{\varepsilon}\operatorname{max}(1, |A|)^{\frac{3-k}{2}+\varepsilon}
|A|^{\frac{k}{2}-\frac{5}{4}-\varepsilon}\right\}.\notag
\end{align}
Similarly to \eqref{PandQ}, we see that $t_4=T[v]$, where $v$ is the vector 
consisting of the second column of $V$.
Using this fact, $|A|=|T|(c_1 c_2)^{-2}$ and 
$(Nc_2 , T[v])^{\frac{1}{2}}\leq N^{\frac{1}{2}}(c_2, T[v])^{\frac{1}{2}}$,
we find that the right-hand side of \eqref{5-6} is
\begin{align*}
\ll &
|T|^{\frac{k}{2}-\frac{3}{4}}
N^{\varepsilon}c_1^{\frac{1}{2}}c_2^{-1+\varepsilon}
(c_2,T[v])^{\frac{1}{2}}
\left\{
N^{-2k+3}|T|^{\frac{k}{2}-\frac{1}{4}+\varepsilon}
(c_1c_2)^{-k+\frac{1}{2}+\varepsilon}
\right. \\
&\left. 
+N^{-k+2} m(T[V\textup{diag}(c_1,c_2)^{-1}U_1^{-1}])^{\varepsilon}
\cdot \operatorname{max}(1, |T| (c_1c_2)^{-2})^{\frac{3-k}{2}+\varepsilon}
\cdot |T|^{\frac{k}{2}-\frac{5}{4}-\varepsilon}(c_1c_2)^{-k+\frac{5}{2}+\varepsilon}
\right\}
.\end{align*}
Therefore,
\begin{align}\label{5-7}
\sum_{0<c_1 |c_2} \sum_{V\in \operatorname{GL}(2,\Z) \slash P(c_1\slash c_2)}
\sum_{U\in \operatorname{GL}(2,\Z)}
&\left|
\sum_{D \, \textup{mod}\, NC\Lambda}h_Q(M,T)
\right| \\
&=\sum_{0<c_1 |c_2} \sum_{V\in \operatorname{GL}(2,\Z) \slash P(c_1\slash c_2)}
\left\{ W_1(c_1,c_2,V)+W_2(c_1,c_2,V)\right\}
,\notag\end{align}
where
\begin{align*}
W_1(c_1,c_2,V)
=&|T|^{\frac{k}{2}-\frac{3}{4}}
N^{-2k+3+\varepsilon}c_1^{-k+1+\varepsilon}c_2^{-k-\frac{1}{2}+\varepsilon}
(c_2,T[v])^{\frac{1}{2}}|T|^{\frac{k}{2}-\frac{1}{4}+\varepsilon},\\
W_2(c_1,c_2,V)
=&|T|^{\frac{k}{2}-\frac{3}{4}}
N^{-k+2+\varepsilon}c_1^{-k+3+\varepsilon}c_2^{-k+\frac{3}{2}+\varepsilon}
(c_2,T[v])^{\frac{1}{2}}m(T[V\textup{diag}(c_1,c_2)^{-1}U_1^{-1}])^{\varepsilon}\\
&\times \operatorname{max}(1, |T|(c_1c_2)^{-2})^{\frac{3-k}{2}+\varepsilon}
\cdot |T|^{\frac{k}{2}-\frac{5}{4}-\varepsilon}.
\end{align*}
The form of $W_2(c_1,c_2,V)$ is quite similar to the right-hand side of
Kitaoka's Lemma 3 (\cite{Ki}, p. 169).    Therefore, by the same argument as in
p.170 of Kitaoka \cite{Ki}, we obtain 
\begin{align}\label{5-8}
\sum_{0<c_1 |c_2} \sum_{V\in \operatorname{GL}(2,\Z) \slash P(c_1\slash c_2)}
W_2(c_1,c_2,V)
\ll N^{2-k+\varepsilon}|T|^{\frac{k}{2}-\frac{1}{4}+\varepsilon}.
\end{align}
On the other hand,
\begin{align}\label{5-9}
\lefteqn{\sum_{0<c_1 |c_2} \sum_{V\in \operatorname{GL}(2,\Z) \slash P(c_1\slash c_2)}
W_1(c_1,c_2,V)}\\
&=
|T|^{k-1+\varepsilon}N^{3-2k+\varepsilon}
\sum_{0<c_1 |c_2} c_1^{-k+1+\varepsilon}c_2^{-k-\frac{1}{2}+\varepsilon}
\sum_{V\in \operatorname{GL}(2,\Z) \slash P(c_1\slash c_2)}
(c_2, T[v])^{\frac{1}{2}}.\notag
\end{align}
To evaluate this double sum, here we quote one more result of Kitaoka.
For $G=(g_{ij})\in \Lambda^*$, set
$e(G):=gcd(g_{11}, g_{22}, 2g_{12})$.
Let $S=\left\{ 
\begin{pmatrix}                                                                        
b\\                                                                                    
d                                                                                      
\end{pmatrix}                                                                          
; b,d \in \Z ,(b,d)=1 \right\}$.
For a positive integer $n$, define $S(n)=S\slash \sim$,
where we denote
$                                                                                      
\begin{pmatrix}                                                                        
b\\                                                                                    
d                                                                                      
\end{pmatrix}                                                                          
\sim                                                                                   
\begin{pmatrix}                                                                        
b'\\                                                                                    
d'                                                                                      
\end{pmatrix}                                                                          
$
if there exists a $w\in \Z$ such that $(w,n)=1$ and
$\begin{pmatrix}                                                                       
b\\                                                                                    
d                                                                                      
\end{pmatrix}                                                                          
\equiv w                                                                                     
\begin{pmatrix}                                                                        
b'\\         
d'                                                                                       
\end{pmatrix}                                                                          
({\rm mod}\; n).
$

\begin{lemma}[Kitaoka \cite{Ki}, p.154, Proposition 2]\label{lem5-5}
For any $P\in \Lambda^*$, we have
$$
\sum_{x\in S(n)}(P[x],n)^{\frac{1}{2}}=O(n^{1+\varepsilon}(e(P),n)^{\frac{1}{2}}).
$$
\end{lemma}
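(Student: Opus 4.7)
\textbf{Proof plan for Lemma \ref{lem5-5}.} The plan is to reduce multiplicatively to prime powers via the Chinese remainder theorem and then, at each prime, to stratify the sum by the $p$-adic valuation of $P[x]$, bounding the number of primitive vectors at each level of the stratification.

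For $n=\prod_p p^{a_p}$, the scaling group $(\Z/n)^\times$ decomposes as $\prod_p(\Z/p^{a_p})^\times$, so CRT gives a natural bijection $S(n)\simeq\prod_p S(p^{a_p})$. Since $(P[x],n)^{1/2}=\prod_p(P[x],p^{a_p})^{1/2}$ and both $n^{1+\varepsilon}$ and $(e(P),n)^{1/2}$ factor over primes, it suffices to show, for every prime power $q=p^a$,
\[
\sum_{x\in S(q)}(P[x],q)^{1/2}\ll q^{1+\varepsilon}(e(P),q)^{1/2}.
\]
Let $\delta=\min(v_p(e(P)),a)$. If $\delta=a$ then $q\mid P[x]$ for every $x$ and the sum is $|S(q)|\cdot q^{1/2}\ll q^{1+\varepsilon}\cdot q^{1/2}$, matching the claim. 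Otherwise write $P=p^\delta P_0$ with $P_0\in\Lambda^*$ and $e(P_0)$ a $p$-adic unit; then $(P[x],q)^{1/2}=p^{\delta/2}(P_0[x],p^b)^{1/2}$ with $b:=a-\delta$, and the task reduces to $\sum_{x\in S(q)}(P_0[x],p^b)^{1/2}\ll q^{1+\varepsilon}$.

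Stratify by setting $N_j=\#\{x\in S(q):p^j\mid P_0[x]\}$. A telescoping argument gives
\[
\sum_{x\in S(q)}(P_0[x],p^b)^{1/2}\ll N_0+\sum_{j=1}^{b}p^{j/2}N_j,
\]
and since $N_0=|S(q)|\asymp p^a$, the conclusion follows once we have the key local bound $N_j\ll p^{a-\lceil j/2\rceil+\varepsilon}$, uniformly in $P_0$ (for then $\sum_{j\ge 1}p^{j/2}N_j\ll b\cdot p^{a+\varepsilon}\ll p^{a+\varepsilon}$, using $b\le a\ll q^\varepsilon$). For odd $p$ this local bound comes from the $p$-adic Jordan decomposition: $P_0$ is $\operatorname{GL}(2,\Z_p)$-equivalent to $\operatorname{diag}(\alpha_1,\alpha_2 p^e)$ with $\alpha_i\in\Z_p^\times$ and $e\ge 0$ (the fact that one coefficient is a unit uses $(e(P_0),p)=1$), and counting primitive solutions of $\alpha_1 x_1^2+\alpha_2 p^e x_2^2\equiv 0\pmod{p^j}$ reduces to a Hensel-lifting argument on one coordinate after the forced $p$-divisibility of the other has been extracted.

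\textbf{Main obstacle.} The prime $p=2$ case is the sticking point: the Jordan decomposition at $2$ may contain indecomposable $2\times 2$ blocks (hyperbolic and anisotropic) in addition to diagonal pieces, and the bound $N_j\ll p^{a-\lceil j/2\rceil+\varepsilon}$ has to be verified by direct case analysis for each such block type. A secondary point is uniformity: one must check that the implied constant does not depend on the form $P_0$, so that no additional loss accumulates when the prime-power estimates are multiplied back together.
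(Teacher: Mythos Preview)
The paper does not prove this lemma at all; it is quoted as Kitaoka's Proposition~2 and the reader is referred to \cite{Ki} for the argument, so there is nothing in the present paper to compare your proposal against. Your plan---CRT reduction to prime powers, extraction of the $p$-part $p^\delta$ of $e(P)$, stratification by $v_p(P_0[x])$, and local counting after $\Z_p$-diagonalisation---is the standard route and is essentially how Kitaoka proceeds in the cited reference, so you are reconstructing the original proof rather than finding an alternative.

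Two small remarks on the write-up. First, on uniformity: when you multiply the prime-power bounds back together, what you actually need is that the implied constant in $N_j\ll p^{a-\lceil j/2\rceil}$ be \emph{absolute} (independent of $p$, not only of $P_0$), so that the product over the $\omega(n)$ primes dividing $n$ costs only $C^{\omega(n)}\ll n^{\varepsilon}$; this is easy to arrange once the local count is made explicit, but is worth stating since it is the point that makes the $\varepsilon$ in the final bound genuine. Second, the $p=2$ obstacle is milder than you suggest: for the hyperbolic block one has $P_0[x]=x_1x_2$, so $2^j\mid P_0[x]$ forces $2^j\mid x_1$ or $2^j\mid x_2$ by primitivity and $N_j\ll 2^{a-j}$; for the anisotropic block $x_1^2+x_1x_2+x_2^2$ the value is odd on every primitive vector, so $N_j=0$ for $j\ge 1$; only the diagonal case at $p=2$ requires the same Hensel analysis as for odd $p$, with a harmless extra factor of $2$ in the constant.
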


Applying this lemma to $P[x]=T[v]$, $n=c_2/c_1$.   Then 
\begin{align*}
\sum_{V\in \operatorname{GL}(2,\Z)\slash P(c_2\slash c_1)}
(c_2, T[v])^{\frac{1}{2}}
&\leq
\sum_{V\in \operatorname{GL}(2,\Z)\slash P(c_2\slash c_1)}
c_1^{\frac{1}{2}}(c_2\slash c_1, T[v])^{\frac{1}{2}}\\
&\ll c_1^{\frac{1}{2}}(c_2\slash c_1)^{1+\varepsilon}
(c_2 \slash c_1,T[v])^{\frac{1}{2}}\\
&\ll c_1^{\frac{1}{2}}(c_2\slash c_1)^{\frac{3}{2}+\varepsilon},
\end{align*}
where the second inequality follows from the lemma and the fact
$e(T)\leq t_4=T[v]$.
Therefore we have
\begin{align*}
&\sum_{0<c_1|c_2}
c_1^{-k+1+\varepsilon}c_2^{-k-\frac{1}{2}+\varepsilon}
\sum_{V\in \operatorname{GL}(2,\Z)\slash P(c_2\slash c_1)}
(c_2,T[v])^{\frac{1}{2}}\\
&\ll
\sum_{0<c_1|c_2}c_1^{-k+\varepsilon}c_2^{-k+1+\varepsilon}
=\sum_{c_1=1}^{\infty}\sum_{c_3=1}^{\infty}c_1^{-2k+1+\varepsilon}
c_3^{-k+1+\varepsilon},
\end{align*}
where we put $c_2=c_1 c_3$.  This is convergent when $k\geq 3$.
Then from \eqref{5-9} we have
\begin{align}\label{5-10}
\sum_{0<c_2|c_1}\sum_{V\in \operatorname{GL}_2(\Z)\slash P(c_2\slash c_1)}
W_1(c_1,c_2,V)\ll
|T|^{k-1+\varepsilon}N^{3-2k+\varepsilon}.
\end{align}
Substituting \eqref{5-8} and \eqref{5-10} into the right-hand side of \eqref{5-7},
we now obtain
\begin{proposition}\label{prop5-6}
If $k\geq 3$, then
$$
\Sigma_2
\ll
N^{2-k+\varepsilon}|T|^{\frac{k}{2}-\frac{1}{4}+\varepsilon}
+N^{3-2k+\varepsilon}|T|^{k-1+\varepsilon}.
$$
\end{proposition}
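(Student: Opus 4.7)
The plan is to combine the estimates already assembled in the preceding analysis. First I would organize $\Sigma_2 = \sum_{M \in \HH_N^{(2)}} h_Q(M,T)$ by invoking Lemma~\ref{lem5-1} to parametrize $\HH_N^{(2)}$ as $(C, D \bmod NC\Lambda)$ with $|C|\ne 0$, and Lemma~\ref{lem5-2} to parametrize $C$ via $U \in \operatorname{GL}(2,\Z)$, $V \in \operatorname{GL}(2,\Z)/P(c_2/c_1)$, and positive integers $c_1 \mid c_2$. The innermost sum over $D$ produces the generalized Kloosterman sum $K(Q,T;NC)$, which I would bound using Lemma~\ref{lem5-3}; combined with the Bessel-integral estimates \eqref{5-3}(a),(c), Kitaoka's counting inequality \eqref{5-5}, and Lemma~\ref{lem5-4}, this yields the decomposition \eqref{5-7} of $\Sigma_2$ into the $W_1$ and $W_2$ contributions.

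For $W_2$, the shape matches the right-hand side of Kitaoka's Lemma~3 (\cite[p.~169]{Ki}) up to an extra factor of $N^{2-k+\varepsilon}$. Running Kitaoka's p.~170 computation verbatim, while carrying that $N$-factor through unchanged, yields \eqref{5-8}, contributing the bound $N^{2-k+\varepsilon}|T|^{k/2-1/4+\varepsilon}$.

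For $W_1$---the genuinely new term reflecting the level---the crux is bounding the $V$-sum of $(c_2, T[v])^{1/2}$, where $v$ denotes the second column of $V$. I would apply Lemma~\ref{lem5-5} with $P[x] = T[v]$ and $n = c_2/c_1$, together with the crude estimate $(Nc_2, t_4)^{1/2}\le N^{1/2}(c_2, t_4)^{1/2}$, to reduce this $V$-sum to $c_1^{1/2}(c_2/c_1)^{3/2+\varepsilon}$. Substituting $c_2 = c_1 c_3$ then produces a double series of shape $\sum c_1^{-2k+1+\varepsilon} c_3^{-k+1+\varepsilon}$, which converges precisely when $k \geq 3$, yielding \eqref{5-10} and hence the second term $N^{3-2k+\varepsilon}|T|^{k-1+\varepsilon}$.

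Substituting \eqref{5-8} and \eqref{5-10} into \eqref{5-7} concludes the proof. The main obstacle is the careful bookkeeping of $N$-exponents throughout the derivation: one must consistently distinguish $Nc_2$ from $c_2$ inside $\gcd$'s, Bessel arguments, and Kloosterman sums, since it is precisely the factor of $N^{1/2}$ extracted from $(Nc_2, t_4)^{1/2}$ that governs the sharp level-aspect exponent in the $W_1$ contribution. The hypothesis $k \geq 3$ enters visibly in the final convergence, but should also be rechecked in Kitaoka's $U$-sum estimate \eqref{5-5} to ensure that the tracking of $N$-factors has not disturbed any of his convergence conditions.
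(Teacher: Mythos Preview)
Your proposal is correct and follows essentially the same route as the paper: parametrize via Lemmas~\ref{lem5-1}--\ref{lem5-2}, bound the $D$-sum by the Kloosterman estimate of Lemma~\ref{lem5-3}, handle the $U$-sum with \eqref{5-3}(a),(c) together with \eqref{5-5} and Lemma~\ref{lem5-4} to reach the $W_1+W_2$ decomposition \eqref{5-7}, then treat $W_2$ by Kitaoka's p.~170 argument and $W_1$ by Lemma~\ref{lem5-5} with $n=c_2/c_1$ and the extraction $(Nc_2,T[v])^{1/2}\le N^{1/2}(c_2,T[v])^{1/2}$. The bookkeeping of $N$-exponents and the location of the $k\ge 3$ convergence condition are exactly as you describe.
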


Next, we make use of \eqref{5-3} (b) as well as (a) and (c) to deduce 
another estimate of $\Sigma_2$.   This time, instead of $A$, we use
$A'=T[V{\rm diag}(Nc_1,Nc_2)^{-1} U_1]$.   Then
$|P|\asymp |A'|$, $\trace P\asymp\trace(A'[U])$, and hence
the sums in the curly parentheses
on the right-hand side of \eqref{5-4} are replaced by
\begin{align*}
&\sum_{U\in \operatorname{GL}(2,\Z), \trace (A'[U])\ll 1}                                
|A'|^{\frac{k}{2}-\frac{3}{4}}                                  
+\sum_{U\in \operatorname{GL}(2,\Z), \trace (A'[U])\ll |A'|}                            
|A'|^{-\frac{1}{4}}\\                                                  
&\quad+\sum_{U\in \operatorname{GL}(2,\Z), \textup{otherwise}}                        
|A'|^{\frac{k}{2}-\frac{3}{4}}                                                  
(\trace (A'[U]))^{\frac{1-k}{2}},  
\end{align*}
for which \eqref{5-5} can be directly applied.   Therefore
\begin{align*}
&\sum_{0<c_1 |c_2} \sum_{V\in \operatorname{GL}(2,\Z) \slash P(c_1\slash c_2)}         
\sum_{U\in \operatorname{GL}(2,\Z)}                                                    
\left|                                                                                
\sum_{D \, \textup{mod}\, NC\Lambda}h_Q(M,T)                                           
\right| \\
&\ll\sum_{0<c_1 |c_2} \sum_{V\in \operatorname{GL}(2,\Z) \slash P(c_1\slash c_2)}
|T|^{\frac{k}{2}-\frac{3}{4}}||NC||^{-\frac{3}{2}}(Nc_1)^2(Nc_2)^{\frac{1}{2}+
\varepsilon}(Nc_2,t_4)^{\frac{1}{2}}\\
&\qquad\qquad\times m(A')^{\varepsilon}{\rm max}(1,|A'|)^{\frac{3-k}{2}+\varepsilon}
|A'|^{\frac{k}{2}-\frac{5}{4}-\varepsilon}\\
&\ll N^{-2k+\frac{9}{2}+\varepsilon}|T|^{k-2+\varepsilon}
\sum_{0<c_1 |c_2} \sum_{V\in \operatorname{GL}(2,\Z) \slash P(c_1\slash c_2)}  
c_1^{-k+3+\varepsilon}c_2^{-k+\frac{3}{2}+\varepsilon}\\
&\qquad\qquad\times (Nc_2,T[v])^{\frac{1}{2}}{\rm max}(1,N^{-4}|T|(c_1 c_2)^{-2})
^{\frac{3-k}{2}+\varepsilon}\\
&=R_1+R_2,
\end{align*}
say, where $R_1$ denotes the part with $(c_1 c_2)^2\geq N^{-4}|T|$, and $R_2$
the remaining part.   Then
\begin{align*}
R_2&=N^{-\frac{3}{2}+\varepsilon}|T|^{\frac{k}{2}-\frac{1}{2}+\varepsilon}
\sum_{{0<c_1 |c_2}\atop{(c_1 c_2)^2< N^{-4}|T|}}c_2^{-\frac{3}{2}+\varepsilon}
\sum_{V\in \operatorname{GL}(2,\Z) \slash P(c_1\slash c_2)}
(Nc_2,T[v])^{\frac{1}{2}}\\
&\ll N^{-\frac{3}{2}+\varepsilon}|T|^{\frac{k}{2}-\frac{1}{2}+\varepsilon}
\sum_{{0<c_1 |c_2}\atop{(c_1 c_2)^2< N^{-4}|T|}}c_2^{-\frac{3}{2}+\varepsilon}
(Nc_1)^{\frac{1}{2}}\sum_{V\in \operatorname{GL}(2,\Z) \slash P(c_1\slash c_2)} 
(c_2/c_1,T[v])^{\frac{1}{2}}\\
&\ll N^{-1+\varepsilon}|T|^{\frac{k}{2}-\frac{1}{2}+\varepsilon}
\sum_{{0<c_1 |c_2}\atop{(c_1 c_2)^2< N^{-4}|T|}}c_1^{\frac{1}{2}}
c_2^{-\frac{3}{2}+\varepsilon}\left(\frac{c_2}{c_1}\right)^{1+\varepsilon}
(c_2/c_1,e(T))^{\frac{1}{2}}
\end{align*}
by Lemma \ref{lem5-5}.   Putting $c_2=nc_1$, we obtain
\begin{align*}
R_2\ll N^{-1+\varepsilon}|T|^{\frac{k}{2}-\frac{1}{2}+\varepsilon}
\sum_{c_1=1}^{\infty}c_1^{-1+\varepsilon}\sum_{n<|T|^{1/2}(c_1 N)^{-2}}
n^{-\frac{1}{2}+\varepsilon}(n,e(T))^{\frac{1}{2}}.
\end{align*}
The last sum is $O(e(T)^{\varepsilon}(|T|^{\frac{1}{2}}(c_1 N)^{-2})^{\frac{1}{2}
+\varepsilon})$ as is shown in p.170 of Kitaoka \cite{Ki}.   Hence
\begin{align}\label{5-11}
R_2\ll N^{-2+\varepsilon}|T|^{\frac{k}{2}-\frac{1}{4}+\varepsilon}
e(T)^{\varepsilon}\sum_{c_1=1}^{\infty}c_1^{-2+\varepsilon}
\ll N^{-2+\varepsilon}|T|^{\frac{k}{2}-\frac{1}{4}+\varepsilon}.
\end{align}

The remaining part $R_1$ can be treated similarly, and it is estimated by exactly
the same right-hand side as that of \eqref{5-11}.   Therefore we now obtain

\begin{proposition}\label{prop5-7}
$$
\Sigma_2\ll N^{-2+\varepsilon}|T|^{\frac{k}{2}-\frac{1}{4}+\varepsilon}.
$$
\end{proposition}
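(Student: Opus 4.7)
The plan is to complete the estimate for $\Sigma_2$ begun in the discussion immediately preceding the statement. The overall strategy, inherited from that discussion, is to obtain a bound sharp in $|T|$ by applying all three Bessel-integral estimates (a), (b), (c) of \eqref{5-3} simultaneously, through Kitaoka's Lemma 2 \eqref{5-5}. The key device is to replace $A = T[V\operatorname{diag}(c_1,c_2)^{-1}U_1]$ by the Minkowski-reduced rescaled matrix $A' = T[V\operatorname{diag}(Nc_1,Nc_2)^{-1}U_1]$. Because $N$ is absorbed into $A'$, one has $|P|\asymp |A'|$ and $\trace P \asymp \trace(A'[U])$, so \eqref{5-5} applies directly to $A'$ without an auxiliary case-split on $|P|$ versus $\trace P$.

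After invoking \eqref{5-5} for the $U$-sum and Lemma \ref{lem5-3} for the $D$-sum, the resulting bound carries the factor $\max(1, N^{-4}|T|(c_1 c_2)^{-2})^{(3-k)/2+\varepsilon}$. I would then split the $(c_1,c_2,V)$ sum into $R_1$ (where $(c_1c_2)^2 \geq N^{-4}|T|$, so the max equals $1$) and $R_2$ (the complement, where the max equals $N^{-4}|T|(c_1c_2)^{-2}$). In both regions the inner $V$-sum is handled identically: estimate $(Nc_2, T[v])^{1/2} \leq N^{1/2} c_1^{1/2}(c_2/c_1, T[v])^{1/2}$, then apply Lemma \ref{lem5-5} with $n=c_2/c_1$ to get a bound of the shape $(c_2/c_1)^{1+\varepsilon}(c_2/c_1, e(T))^{1/2}$, and substitute $c_2 = nc_1$.

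The computation for $R_2$ has been carried out explicitly in \eqref{5-11}, giving $R_2 \ll N^{-2+\varepsilon}|T|^{k/2-1/4+\varepsilon}$, using the elementary bound on $\sum_{n\leq X}n^{-1/2+\varepsilon}(n,e(T))^{1/2}$ from p.170 of \cite{Ki}. For $R_1$, I would run the same steps, but now with the max factor equal to $1$. Here the constraint $(c_1c_2)^2 \geq N^{-4}|T|$ lets one freely bound the sum either directly (since the resulting double sum in $(c_1, n)$ converges for $k\geq 3$) or by inserting the constraint as a power to trade factors of $N^{-4}|T|$ against $(c_1c_2)^2$. Either way the goal is to see that $R_1$ obeys the same upper bound as $R_2$. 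Adding the two contributions then yields the proposition.

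The main obstacle is the bookkeeping of exponents of $N$, $|T|$, $c_1$, and $c_2$: one has to verify that the extra factors of $N$ introduced by working with $A'$ (rather than $A$) cancel correctly in both regions so that $R_1$ and $R_2$ each come out to exactly $N^{-2+\varepsilon}|T|^{k/2-1/4+\varepsilon}$, with no spurious dependence on $k$ in the $N$-exponent and no loss of the sharp $|T|^{k/2-1/4}$ scaling. Once this exponent-tracking is pinned down (the convergence of the $(c_1,n)$-sum for $k\geq 3$ is straightforward given Lemma \ref{lem5-5}), the proposition follows.
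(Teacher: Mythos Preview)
Your proposal is correct and follows the paper's own argument essentially line for line: the switch from $A$ to $A'=T[V\operatorname{diag}(Nc_1,Nc_2)^{-1}U_1]$, the direct application of \eqref{5-5}, the split into $R_1$ and $R_2$ according to whether $(c_1c_2)^2\ge N^{-4}|T|$, and the treatment of the $V$-sum via $(Nc_2,T[v])^{1/2}\le (Nc_1)^{1/2}(c_2/c_1,T[v])^{1/2}$ followed by Lemma~\ref{lem5-5} are exactly what the paper does. The paper likewise dispatches $R_1$ with the single sentence that it ``can be treated similarly'' and satisfies the same bound as \eqref{5-11}; your remark that one may insert the factor $\bigl(N^{-4}|T|(c_1c_2)^{-2}\bigr)^{(3-k)/2+\varepsilon}\ge 1$ on the $R_1$ region to reduce to the $R_2$-type summand is the natural way to make this precise.
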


\section{Completion of the proof of Theorem 1.1}

From \eqref{2-11}, Proposition \ref{prop3-2}, Proposition \ref{prop4-3} and 
Proposition \ref{prop5-6},
we have
\begin{align}\label{6-1}
A_{Q,N}(T)
&=\# \{ U\in \operatorname{GL}_2(\Z)\ | \ UQ\tr{U}=T \}\\
&+O(N^{\frac{3}{2}-k}|T|^{k-\frac{3}{2}}+
N^{2-k+\varepsilon}|T|^{\frac{k}{2}-\frac{1}{4}+\varepsilon}
+N^{3-2k+\varepsilon}|T|^{k-1+\varepsilon})\notag
\end{align}
for $k\geq 3$.   This implies assertions (i) and (ii) of Theorem
\ref{th1-1}.    On the other hand, from \eqref{2-11}, Proposition \ref{prop3-2}, 
Proposition \ref{4-5} and proposition \ref{5-7}, assertion (iii) follows.
The proof of Theorem \ref{th1-1} is thus complete.


\end{document}